\numberwithin{equation}{section}
\theoremstyle{plain}
\newtheorem{thm}{Theorem}
\newtheorem{lem}{Lemma}
\theoremstyle{definition}
\newtheorem{defn}{Definition}
\theoremstyle{remark}
\newtheorem{rem}{Remark}
\newcommand{\gen}[1]{\left < #1 \right >}
\newcommand{\ad}{\operatorname{ad}}
\newcommand{\rank}{\operatorname{rank}}
\newcounter{cnt}
\def\mydggeometry{\makeatletter\dg@YGRID=1\dg@XGRID=20\unitlength=0.003pt\makeatother}
\makeatother \theoremstyle{remark}
\numberwithin{equation}{section}
\def\section{\def\@secnumfont{\mdseries}\@startsection{section}{1}%
  \z@{.7\linespacing\@plus\linespacing}{.5\linespacing}%
  {\normalfont\scshape\centering}}
\def\subsection{\def\@secnumfont{\bfseries}\@startsection{subsection}{2}%
  {\parindent}{.5\linespacing\@plus.7\linespacing}{-.5em}%
  {\normalfont\bfseries}}
\begin{document}
\baselineskip 15pt


\title{Image of Lie polynomial of degree $2$ evaluated on Nilpotent Lie algebra}
\author{Niranjan}
\address{ Indian Institute of Science Education and Research, Mohali, Punjab, India}
\email{niranjannehra11@gmail.com, ph17042@iisermohali.ac.in}
\author{Shushma Rani$^{\ast}$}
\address{Indian Institute of Science Education and Research, Mohali, Punjab, India}
\email{shushmarani95@gmail.com, ph16067@iisermohali.ac.in.}
 
\thanks{2020 Mathematics Subject Classification. 17A60,17B05,17B30\\
$^{*}$The corresponding author.}
\keywords{Multilinear Lie polynomials, Nilpotent Lie algebra, Frattini subalgebra, Isoclinism, Breadth}
\maketitle

\begin{abstract} We delineate the image of multilinear Lie polynomial of degree $2$  evaluated on $L$ where $L$ is a finite-dimensional nilpotent Lie algebra over field $k$ with $\dim L' \leq 4$. 

\end{abstract}
\section{Introduction} In recent years, people have had a keen interest in the famous Lvov-Kaplansky Conjecture: \emph{The image of a multilinear polynomial in noncommutative variables over a field $K$ on the matrix algebra $M_n(K)$ is a vector space.} For more details, see \cite{serveyLKC2020},\cite{image2},\cite{imageliemon2017},\cite{imglie}. A variation of the Lvov–Kaplansky conjecture has been formulated in \cite{imglie}: \emph{The image of a multilinear Lie polynomial over a field $K$ on classical Lie algebras is a vector space}. Further, they proved this result for multilinear Lie polynomials of degrees $3$ and $4$ for some classical Lie algebras.

           A non zero multilinear Lie polynomial in two variables is nothing but a non-zero scalar multiple of the Lie bracket of these variables. For multilinear Lie polynomials of degree 2, this problem has been studied by Brown in \cite{brown1963}. He proved that every element of split semi-simple Lie algebra $L$ is a Lie bracket over all infinite fields or finite fields of sufficiently big cardinality. For real semi-simple Lie algebras with some conditions, the same result proved in \cite{dmitri2015}. On simple Lie algebras, this problem can be seen as the Lie theoretical version of a famous Ore conjecture which was proved recently in \cite{ore_2010}. \emph{Ore's conjecture: Every element of a finite simple group is a single commutator.}

On nilpotent Lie algebras, the image set of a non-zero multilinear Lie polynomial of degree 2 is not necessarily a vector space. Smallest such nilpotent Lie algebra over a field $k$ with characteristic not equal to $2$ is $L_{6,21}(0)$, see Section \ref{sec6}. So a natural way to study this question is imposing some constraints on Lie algebra. We imposed a constraint on the spanning set of images, which is nothing but a derived Lie subalgebra $L'=[L, L].$ Let us denote the image set of multilinear Lie polynomial of degree $2$ on $L$ by $w(L).$ For the finite-dimensional nilpotent Lie algebra $L$ over $k$, the breadth of $x \in L$, denoted by $b(x)$, is the rank of the adjoint map $ad x\colon L \rightarrow L$ given by $y \mapsto [x,y].$ The breadth of a Lie algebra $L$, denoted by $b(L)$, is the maximum of the breadths of all elements of $L$ and breadth type is a tuple consists of all possible breadths of elements of $L$. The following theorems are the main results of our paper. 

\begin{thm}\label{dimL'3} Let $L$ be a finite-dimensional nilpotent Lie algebra over field $k$ of characteristic not equal to $2$ such that $\dim L' \leq 3.$ Then $w(L)=L'.$
\end{thm}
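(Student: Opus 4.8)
The inclusion $w(L)\subseteq L'$ is immediate, so the entire content of the statement is the reverse inclusion $L'\subseteq w(L)$. The basic tool I would use is the \emph{full breadth} observation: writing $B\colon L\times L\to L'$, $B(x,y)=[x,y]$, for the alternating commutator map, the image of $\ad x$ always lies in $L'$, so if some $x$ satisfies $b(x)=\rank(\ad x)=\dim L'$, then $\ad x\colon L\to L'$ is onto and \emph{every} element of $L'$ is already a single bracket $[x,y]$; in that case $w(L)=L'$. Thus it suffices to treat the Lie algebras of \emph{deficient breadth}, those with $b(L)<\dim L'$, and to organize the analysis by the breadth type of $L$.

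First I would dispose of the small cases. If $\dim L'=1$, write $L'=kz$ and $[x,y]=\beta(x,y)\,z$ for a nonzero alternating form $\beta$; since $\beta(x_0,\lambda y_0)=\lambda\beta(x_0,y_0)$ sweeps out all of $k$ whenever $\beta(x_0,y_0)\ne 0$, one gets $w(L)=kz=L'$. Next I would establish the structural lemma that $b(L)=1$ forces $\dim L'=1$: assuming all $\ad x$ have rank $\le 1$ while two bracket values span distinct lines, comparing $B(x_1+x_2,\cdot)$ with $B(y_0,\cdot)$ for $y_0$ outside the common kernel of the two governing functionals produces an element of breadth $\ge 2$, a contradiction. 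Consequently $\dim L'=2$ already forces $b(L)=2=\dim L'$ (the full-breadth case), and for $\dim L'=3$ the breadth can only be $2$ or $3$, with $b(L)=3$ again full breadth. Hence the single genuinely new situation is $(\dim L',b(L))=(3,2)$.

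The main obstacle is precisely this deficient case $\dim L'=3$, $b(L)=2$, where no single adjoint map surjects onto $L'$ and the image is instead a union $w(L)=\bigcup_{x}[x,L]$ of $2$-dimensional subspaces of the $3$-dimensional $L'$; I must show these planes genuinely cover all of $L'$. My plan here is to exploit isoclinism: since $L'$ and the commutator map, hence the property $w(L)=L'$, are isoclinism invariants, I may replace $L$ by a \emph{stem} Lie algebra ($Z(L)\subseteq L'$) in its isoclinism class. For a stem algebra the constraints $\dim L'=3$ and $b(L)=2$ bound $\dim L$, placing $L$ in a short explicit list of breadth-$2$ stem nilpotent Lie algebras with three-dimensional derived algebra (over fields of characteristic $\ne 2$). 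The representative model is the free class-$2$ nilpotent Lie algebra on three generators, where $L'\cong\bigwedge^2 k^3$ and $[x,y]$ corresponds to the wedge $x\wedge y$: every element of $\bigwedge^2 k^3$ is decomposable, so every element of $L'$ is a bracket and $w(L)=L'$.

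For each remaining algebra in the list (for instance the free class-$3$ algebra on two generators) I would verify $L'\subseteq w(L)$ by the same mechanism, solving the bilinear system $[\,a e_i+\cdots,\ c e_j+\cdots\,]=\text{(target)}$ explicitly; this is where characteristic $\ne 2$ and the low-dimensional decomposability of the commutator form enter. The crux — and the step I expect to be hardest — is proving that the $(3,2)$ case is exhausted by this short list (the classification/isoclinism reduction) and that for each member the union of the planes $[x,L]$ covers the whole three-dimensional $L'$. Once that is in hand, combining it with the full-breadth observation for $b(L)=\dim L'$ yields $w(L)=L'$ in all cases with $\dim L'\le 3$.
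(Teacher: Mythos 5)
Your proposal is correct and follows essentially the same route as the paper: dispose of $b(L)=\dim L'$ via a single surjective $\ad x$, reduce to stem algebras by isoclinism, and in the remaining case $(\dim L',b(L))=(3,2)$ arrive at exactly the paper's two subcases (the free $2$-step algebra on three generators, where decomposability of elements of $\bigwedge^2 k^3$ gives the result, and the free $3$-step algebra on two generators, handled by the same computation with $z=[x,y]$). The classification step you flag as the crux is precisely the statement that $b(L)=2$ and $\dim L'=3$ force $\dim(L/Z(L))=3$ (Theorem \ref{breadth2_classify}), which the paper likewise imports from the literature rather than proving.
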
  

\begin{thm}\label{thmA} Let $L$ be a nilpotent Lie algebra over the finite field $k=\mathbb{F}_q$ of odd characteristic with $\dim L'=4.$ Then $w(L) \neq L'$ if and only if one of the following holds:
	\begin{itemize}
		\item $L$ is $4$-step nilpotent with $\dim (L)=6 $ and $\dim (Z(L))=2.$
		\item $L$ is $3$-step nilpotent with $\dim (L)=7 $ and $\dim (Z(L))=3.$
		\item $L$ is $2$-step nilpotent with $\dim (L)=8 $ along with one of the following:
		\begin{itemize}
			\item Breadth type of $L$ is $(0, 1,3)$ or $(0,1, 2, 3)$.
			\item Breadth type of $L$ is $(0,2,3)$ and does not possesses any generating set $\{u_1,u_2,u_3,u_4\}$ such that $[u_1,u_2]=0=[u_3,u_4].$
		\end{itemize}
	\end{itemize}
	Further, if $w(L) \neq L'$ then each element of $L'$ is sum of at most two elements of $w(L).$
\end{thm}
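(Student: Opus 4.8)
The plan is to combine three ingredients: the fact that the property $w(L)=L'$ is an invariant of isoclinism, the description of $w(L)$ as a union of subspaces $\bigcup_{x\in L}\operatorname{im}(\ad x)$, and the known classification of nilpotent Lie algebras with small derived algebra. Since $w(L)=\bigcup_{x\in L}\operatorname{im}(\ad x)$ and each $\operatorname{im}(\ad x)$ is a subspace of $L'$ of dimension $b(x)$, the equality $w(L)=L'$ is exactly the assertion that this family of subspaces covers the $4$-dimensional space $L'$. First I would record the trivial case: if $b(L)=\dim L'=4$ then some $\ad x$ is surjective onto $L'$, whence $w(L)=L'$; so throughout we may assume $b(L)\le 3$, i.e. every $\operatorname{im}(\ad x)$ is a proper subspace of $L'$.

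Next, relying on the isoclinism-invariance of the property $w(L)=L'$, I would reduce to \emph{stem} algebras, those with $Z(L)\subseteq L'$. For a stem algebra with $\dim L'=4$ the centre sits inside a $4$-dimensional space, and the lower central series together with the stem condition constrains $\dim L$ and $\dim Z(L)$ in each nilpotency class. Carrying this out class by class is what produces the three families in the statement: the $4$-step case forces $(\dim L,\dim Z(L))=(6,2)$, the $3$-step case forces $(7,3)$, and the $2$-step case forces $\dim L=8$ with $Z(L)=L'$. Here I would invoke the existing classification of nilpotent Lie algebras of the relevant dimensions to enumerate the finitely many candidate structures in each family, and then attach to each the relevant breadth type.

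The heart of the argument is then a finite-field covering computation performed on each candidate. For the direction ``$w(L)\neq L'$ on the listed algebras'' I would exhibit, from explicit structure constants, an element of $L'$ lying in none of the subspaces $\operatorname{im}(\ad x)$; over $\mathbb{F}_q$ this is a counting statement, since a $4$-dimensional space cannot be covered by the available family of subspaces of dimension $\le 3$ once that family is too small or too ``aligned''. For the converse I must show that on every other algebra the subspaces $\operatorname{im}(\ad x)$ genuinely exhaust $L'$; this is where the breadth type enters, as it controls both the dimensions and the number of distinct images available. The main obstacle I anticipate is the $2$-step, $\dim L=8$ case of breadth type $(0,2,3)$: here whether the images cover $L'$ is equivalent to the combinatorial condition about a generating set $\{u_1,u_2,u_3,u_4\}$---equivalently a basis of the Frattini quotient $L/L'$---with $[u_1,u_2]=0=[u_3,u_4]$, and translating the failure of covering into exactly this condition (and no weaker one) is the delicate point, requiring a careful analysis of the induced alternating map $\Lambda^2(L/L')\to L'$ and its locus of rank $\le 3$.

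Finally, for the ``sum of at most two'' assertion, I would note it suffices to produce two elements $x_1,x_2$ with $\operatorname{im}(\ad x_1)+\operatorname{im}(\ad x_2)=L'$: then every $v\in L'$ splits as $v=a+b$ with $a\in\operatorname{im}(\ad x_1)$ and $b\in\operatorname{im}(\ad x_2)$, each a single value of the bracket, so $v$ is a sum of two elements of $w(L)$. Since in all the exceptional cases $b(L)=3$, any two distinct maximal images are $3$-dimensional subspaces of the $4$-dimensional $L'$ meeting in a plane, so their sum is all of $L'$; thus the remaining task is only to verify that two such distinct maximal-breadth elements exist, which falls out of the structural description already obtained. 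This last step I expect to be routine compared with the covering analysis above.
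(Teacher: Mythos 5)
Your overall frame (reduce to stem algebras by isoclinism, dispose of $b(L)=4$ since then some $\ad x$ surjects onto $L'$, then do finite-field covering computations case by case) matches the paper's. But there is a genuine gap in your reduction step: the stem condition does \emph{not} force $(\dim L,\dim Z(L))=(6,2)$ in the $4$-step case, $(7,3)$ in the $3$-step case, or $\dim L=8$ in the $2$-step case. It only forces the \emph{lower bounds} $\dim L\geq 6,7,8$ respectively (and $Z(L)=L'$ in the $2$-step case); there are stem nilpotent Lie algebras with $\dim L'=4$ of every larger dimension in each class (e.g.\ quotients of the free $2$-step algebra on $5$ generators give $9$-dimensional $2$-step stem examples). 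Consequently the ``only if'' direction of the theorem --- that $w(L)=L'$ for every algebra \emph{not} on the list --- is unproven in your plan for all of these higher-dimensional algebras, and this is where most of the paper's work lies: a structure theorem (Theorem \ref{keyresult}) decomposing a $3$- or $4$-step $L$ with $b(L)=3$ as $M+N$ with $M$ a $5$-, $6$- or $7$-dimensional ideal carrying $L'$ and $N$ close to Heisenberg, followed by separate covering arguments for $\dim L\geq 9$ in the $2$-step case (Lemma \ref{cl2lem2}), $\dim L\geq 8$ in the $3$-step case (Lemma \ref{main1}), and $\dim L\geq 7$ in the $4$-step case (Lemma \ref{4-step nilpotent}). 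Relatedly, you cannot lean on ``the existing classification of nilpotent Lie algebras of the relevant dimensions'': no usable classification exists over general $\mathbb{F}_q$ in dimensions $7$ and $8$, which is why the paper substitutes breadth-type structure theorems (Theorems \ref{breadth2_classify}, \ref{breadth3_classify}) for a classification. You also omit the $5$-step case entirely ($\dim L'=4$ permits $5$-step algebras); the paper shows these force $b(L)=4$, hence $w(L)=L'$, and some argument of this kind is needed.

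Two smaller points. Your treatment of the final assertion is a legitimately different route: the paper instead quotients by a suitable $1$-dimensional central ideal $I$, applies Theorem \ref{dimL'3} to get $w(L/I)=(L/I)'$, and lifts (Lemma \ref{prelem5}); your ``two $3$-dimensional images of $\ad$ span $L'$'' argument is cleaner in principle but still requires exhibiting, in each exceptional algebra, two breadth-$3$ elements with distinct images, so it is comparable in labor. Finally, you correctly isolate the breadth-type $(0,2,3)$ case as the delicate one, but note that the paper's resolution there is not a rank-locus analysis of $\Lambda^2(L/L')\to L'$; it is an explicit change-of-generators argument culminating in a quadratic-residue obstruction over $\mathbb{F}_q$, and some such arithmetic input (non-squares in $k^*$) is unavoidable, since the answer genuinely depends on the field.
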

Image set of commutator word map for finite groups is current topic of research \cite{group2005},\cite{P-group2005},\cite{manoj2021}. Our results are similar to the one given for finite $p$-groups in \cite{P-group2005},\cite{manoj2021}. Most of the arguments presented in Theorem \ref{thmA} are similar to \cite{manoj2021}.

Our paper has been organized in the following way. In Section \ref{sec1}, we will discuss some definitions and notations. In Section \ref{sec2}, we will see some results that we will use in each of the subsequent sections. Section \ref{sec3} will give a proof of Theorem \ref{dimL'3}. Sections \ref{sec4},\ref{sec5} and \ref{sec6} describe our question under investigation for $2$-step, $3$-step, and and $4$-step nilpotent Lie algebras, respectively. In the last section \ref{sec7}, we discuss the proof of our main Theorem \ref{thmA}. 

{\em Acknowledgments. The authors thank Tushar Kanta Naik for helpful discussions. The first author acknowledges the CSIR research grant: 09/947(0084)/2017-EMR-I. The second author acknowledges the CSIR research grant: 09/947(0082)/2017-EMR-I.}

\section{Preliminaries}\label{sec1}
We will assume $L$ is a finite-dimensional Lie algebra over field $k.$
\noindent
\begin{defn} A nonzero {\bf multilinear Lie polynomial} $w$ of degree $2$ is a polynomial over field $k$ that can be written in the form $$w(x,y)=c[x,y]$$
where $0 \neq c \in k.$
\end{defn}

\begin{defn} Let $L$ be a Lie algebra over the field $k$, then the lower central series of $L$ is defined as
$$ L^0 \supset L^1 \supset L^2 \supset \cdots L^i \supset L^{i+1} \supset \cdots \cdots $$ where $L^0=L,\,  L^1=L'=[L,L]$, $L^i=[L, L^{i-1}].$ $L$ is said to be a nilpotent Lie algebra if this lower central series terminates at a finite step, that is, $L^n=0$ for some $n.$ If $n$ is the least non-negative integer with this property, i.e., $L^{n-1} \neq 0$, then $n$ is called the nilpotency class of $L$ or we say, $L$ is {\bf $n$-step nilpotent}. For example, any abelian Lie algebra is $1$-step nilpotent.
\end{defn}

From \cite{tower1973},\cite{marshall1967}, and \cite{ernest1970}, we recall some definitions and results that are required in our proofs.

\begin{defn}  An element $x \in L$ is called a non-generator of $L$ if, whenever $S$ is a subset of $L$ such that $S$ and $x$ together generate $L$, then $S$ alone generates $L.$
\end{defn}

\begin{defn}
 The {\bf Frattini subalgebra}, $F(L)$ of a Lie algebra, is defined as the intersection of the maximal subalgebras of $L.$
\end{defn}

\begin{rem} Any maximal subalgebra $M$ of a nilpotent Lie algebra $L$ is an ideal of $L.$ The Frattini subalgebra $F(L)$ of a nilpotent Lie algebra is equal to the derived subalgebra of $L$ and $F(L)$ is the set of non-generators of $L$. Further, the dimension of $L/L'$ is the minimal number of generators of $L.$
\end{rem}
 

 \begin{defn}
 	Two Lie algebras $L_1$ and $L_2$ over field $k$ are said to be isoclinic, whenever there exist isomorphisms $\eta: L_1/Z(L_1) \longrightarrow L_2/Z(L_2)$ and $\tau: L_1' \longrightarrow L_2'$ such that if $$\eta(u_1+Z(L_1))= v_1+Z(L_2), \,\, 
 	\eta(u_2+Z(L_1))= v_2+Z(L_2)$$ then $\tau([u_1,u_2])=[v_1,v_2]$. The pair $(\eta,\tau)$ is called isoclinism between $L_1$ and $L_2$.
 
 \end{defn}

\begin{defn} A Lie algebra $L$ is said to be {\bf stem (or pure)} if $Z(L) \subseteq L'.$
\end{defn}

\begin{rem}
Each isoclinism family of finite-dimensional Lie algebra contains a stem Lie algebra of minimal dimension.
\end{rem}
\begin{defn}
A Lie algebra $L$ is said to be {\bf the central product} of $M$ and $N$, if $L=M+N$, where $M$ and $N$ are ideals of $L$ such that $[M, N]=0$ and $M \cap N \subseteq Z(L).$ 
\end{defn}

\begin{defn}
For the finite-dimensional nilpotent Lie algebra $L$ over $k$, the breadth of $x \in L$, denoted by $b(x)$, is the co-dimension of $C_L(x)$, the centralizer of $x$ in $L$, can also be thought of as the rank of the adjoint map $ad x\colon L \rightarrow L$ given by $y \mapsto [x,y].$ The maximum of the breadths of all elements of $L$ is called {\bf breadth of a Lie algebra} $L$, denoted by $b(L)$. For a Lie algebra $L$, \textbf{breadth type} is a tuple consists of all possible breadths of elements of $L$.
\end{defn}

\section{Results}\label{sec2}
The proof of the following lemmas are straightforward.
\begin{lem}
Let $L_1$ and $L_2$ be two isoclinic finite-dimensional Lie algebras. Then $L_1$ and $L_2$ are of the same breadth type.
\end{lem}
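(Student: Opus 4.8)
The plan is to show that isoclinism preserves the entire breadth type, which amounts to proving that the breadth $b(x)$ of an element depends only on its image in $L/Z(L)$ together with the isomorphism $\tau$ identifying the derived subalgebras, both of which are matched up by the isoclinism data $(\eta,\tau)$. First I would observe that the breadth $b(x) = \operatorname{codim} C_L(x)$ is really an invariant of the coset $x + Z(L)$: since $Z(L)$ is contained in $C_L(x)$ and $[x,y]$ depends only on $x + Z(L)$ and $y + Z(L)$, the adjoint map $\operatorname{ad} x$ factors through a well-defined bilinear pairing $L/Z(L) \times L/Z(L) \to L'$ sending $(x+Z(L), y+Z(L)) \mapsto [x,y]$. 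Thus $b(x)$ equals the rank of the induced map $\overline{\operatorname{ad}} (x+Z(L)) \colon L/Z(L) \to L'$.

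Next I would use the defining property of the isoclinism $(\eta,\tau)$ to intertwine these pairings. Given $x \in L_1$, set $v + Z(L_2) = \eta(x + Z(L_1))$. The condition $\tau([u_1,u_2]) = [v_1,v_2]$ exactly says that for every $y \in L_1$, writing $\eta(y + Z(L_1)) = w + Z(L_2)$, we have $\tau([x,y]) = [v,w]$. In other words, the following diagram commutes: the induced adjoint map $\overline{\operatorname{ad}}(x+Z(L_1)) \colon L_1/Z(L_1) \to L_1'$, followed by $\tau$, equals $\eta$ followed by $\overline{\operatorname{ad}}(v + Z(L_2)) \colon L_2/Z(L_2) \to L_2'$. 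Since $\eta$ and $\tau$ are isomorphisms (of vector spaces in particular), composing with them does not change the rank of a linear map, so $\operatorname{rank} \overline{\operatorname{ad}}(x+Z(L_1)) = \operatorname{rank} \overline{\operatorname{ad}}(v + Z(L_2))$, i.e. $b(x) = b(v)$.

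From here I would conclude that $\eta$ induces a rank-preserving bijection between cosets of $L_1/Z(L_1)$ and cosets of $L_2/Z(L_2)$, so the set of values $\{b(x) : x \in L_1\}$ coincides with $\{b(v) : v \in L_2\}$; hence $L_1$ and $L_2$ have the same breadth type. A small point to check is that every element of $L_2$ arises as a $v$ for some $x \in L_1$ (surjectivity of $\eta$ handles this at the level of cosets, and since breadth is constant on cosets this suffices), and that the breadth is genuinely constant on each coset, which follows from the factorization in the first step.

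The main obstacle, such as it is, is purely bookkeeping: verifying carefully that the breadth really is a coset invariant (so that working modulo the centers is legitimate) and that the isoclinism condition, stated for pairs of cosets, delivers precisely the commuting square relating the two induced adjoint maps. Once the factorization through $L/Z(L)$ and $L'$ is set up, the rank-invariance under the isomorphisms $\eta$ and $\tau$ is immediate, which is presumably why the authors call the proof straightforward.
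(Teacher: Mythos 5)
Your argument is correct and is exactly the standard one the paper has in mind: the paper omits the proof entirely (labeling it ``straightforward''), and your factorization of $\operatorname{ad} x$ through $L/Z(L) \to L'$ together with the intertwining relation $\tau \circ \overline{\operatorname{ad}}(x+Z(L_1)) = \overline{\operatorname{ad}}(v+Z(L_2)) \circ \eta$ supplied by the isoclinism is precisely the intended verification. Nothing is missing.
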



\begin{lem}\label{prelem}
Let $L_1$ and $L_2$ be two isoclinic finite-dimensional nilpotent Lie algebras over field $k.$ Then $w(L_1)=L_1'$ if and only if $w(L_2)=L_2'.$ 
\end{lem}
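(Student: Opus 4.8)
The plan is to transport the set of commutators through the isomorphism $\tau\colon L_1' \to L_2'$ supplied by the isoclinism, and to show that $\tau$ carries $w(L_1)$ onto $w(L_2)$. Once this is established the equivalence is immediate: since $\tau$ is a $k$-linear isomorphism with $\tau(L_1')=L_2'$, we have $w(L_1)=L_1'$ if and only if $\tau(w(L_1))=\tau(L_1')=L_2'$, which is exactly $w(L_2)=L_2'$.

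First I would record the harmless observation that for a multilinear Lie polynomial $w(x,y)=c[x,y]$ with $0\neq c\in k$, the image set is $w(L)=\{c[x,y]:x,y\in L\}=\{[x,y]:x,y\in L\}$, since the nonzero scalar $c$ is absorbed into one of the arguments as $y$ ranges over all of $L$. Thus $w(L)$ is precisely the set of commutators of $L$, and $L'$ is its $k$-span. This reduction lets me work directly with brackets rather than with the scalar multiple.

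The key step is to prove $\tau(w(L_1))=w(L_2)$. For the inclusion $\tau(w(L_1))\subseteq w(L_2)$, take a commutator $[u_1,u_2]\in w(L_1)$, choose $v_1,v_2\in L_2$ with $\eta(u_i+Z(L_1))=v_i+Z(L_2)$, and apply the defining compatibility of the isoclinism to obtain $\tau([u_1,u_2])=[v_1,v_2]\in w(L_2)$. For the reverse inclusion, given $[v_1,v_2]\in w(L_2)$, I would use surjectivity of $\eta$ to pick $u_1,u_2\in L_1$ with $\eta(u_i+Z(L_1))=v_i+Z(L_2)$; then the isoclinism relation gives $\tau([u_1,u_2])=[v_1,v_2]$, exhibiting $[v_1,v_2]$ as an element of $\tau(w(L_1))$.

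The only point requiring care, and the closest thing to an obstacle here, is the well-definedness underlying the isoclinism relation: the bracket $[v_1,v_2]$ must not depend on the choice of coset representatives, which holds because modifying either $v_i$ by a central element leaves $[v_1,v_2]$ unchanged, and likewise on the $L_1$ side. This consistency is exactly what is built into the definition of isoclinism, so with it in hand no further computation is needed and the equivalence follows.
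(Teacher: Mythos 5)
Your proof is correct, and since the paper omits the argument entirely (labelling it ``straightforward''), what you have written is precisely the standard transport-via-$\tau$ argument the authors intended. The reduction $w(L)=\{[x,y]:x,y\in L\}$, the two inclusions giving $\tau(w(L_1))=w(L_2)$, and the final step using linearity of $\tau$ are all sound.
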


In the light of above, for the classification of Lie algebras of a given breadth type, it is enough to consider stem Lie algebras of that breadth type. From now onwards, \textbf{we will consider only stem Lie algebras}.

\begin{lem}\label{maximal_nilpotency_class}
For a finite-dimensional Lie algebra $L$ over field $k$ which is atleast $4$- step nilpotent, $Z(L)\cap L'$ cannot be maximal in $L'.$
\end{lem}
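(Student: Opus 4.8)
The plan is to argue by contradiction. Suppose $L$ is at least $4$-step nilpotent, so that $L^3 \neq 0$ (since the nilpotency class is $\geq 4$, we have $L^{n-1}\neq 0$ with $n-1\geq 3$, whence $L^3 \supseteq L^{n-1}\neq 0$), and suppose toward a contradiction that $Z(L)\cap L'$ is maximal in $L'$. Because $L'$ is itself a nilpotent Lie algebra, every maximal subalgebra of $L'$ is an ideal of codimension one, by the Frattini facts recorded in Section~\ref{sec1} applied to $L'$; hence $\dim L' - \dim\big(Z(L)\cap L'\big) = 1$. Observe that $Z(L)\cap L'$ is an ideal of $L$, being the intersection of the ideals $Z(L)$ and $L'$, so I may pass to the quotient Lie algebra $\overline{L} := L/(Z(L)\cap L')$, which is again nilpotent and whose derived subalgebra is $\overline{L}' = L'/(Z(L)\cap L')$, of dimension exactly $1$.

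Next I would analyse a nilpotent Lie algebra carrying a one-dimensional derived subalgebra. Writing $\overline{L}' = \langle \bar z\rangle$, note that since $\overline{L}'$ is an ideal, for each $\bar x \in \overline{L}$ the operator $\operatorname{ad}\bar x$ preserves $\langle \bar z\rangle$ and so acts on it by a scalar; as $\overline{L}$ is nilpotent, $\operatorname{ad}\bar x$ is nilpotent, which forces that scalar to vanish. Thus $\bar z \in Z(\overline{L})$ and $\overline{L}^2 = [\overline{L}, \overline{L}'] = 0$, i.e.\ $\overline{L}$ is at most $2$-step nilpotent. Translating this back through the quotient map says precisely that $L^2 = [L, L'] \subseteq Z(L)\cap L' \subseteq Z(L)$.

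Finally, from $L^2 \subseteq Z(L)$ I would conclude $L^3 = [L, L^2] \subseteq [L, Z(L)] = 0$, contradicting $L^3 \neq 0$; this contradiction shows that $Z(L)\cap L'$ cannot be maximal in $L'$. The two places where the hypotheses genuinely enter are the codimension-one statement for maximal subalgebras of the nilpotent algebra $L'$ and the vanishing of the scalar action of $\operatorname{ad}$ on the one-dimensional ideal $\overline{L}'$, both consequences of nilpotency, together with the use of the "at least $4$-step" hypothesis to guarantee $L^3\neq 0$. I expect the only point needing real care to be the correct reading of "maximal" as \emph{maximal subalgebra} (hence codimension one in $L'$), since it is exactly this that pins down $\dim\overline{L}' = 1$; the remainder is routine bookkeeping with the lower central series.
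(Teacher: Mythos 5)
Your proof is correct and follows essentially the same route as the paper's: both deduce that $Z(L)\cap L'$ would have codimension one in $L'$, that the quotient $L/(Z(L)\cap L')$ is then $2$-step nilpotent, hence $L^2\subseteq Z(L)$ and so $L^3=0$, contradicting the at-least-$4$-step hypothesis. You merely spell out the intermediate steps (the codimension-one fact for maximal subalgebras of a nilpotent algebra and the vanishing of the scalar action of $\operatorname{ad}$ on the one-dimensional derived subalgebra of the quotient) that the paper leaves implicit.
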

\begin{proof}
Suppose, if possible that $Z(L)\cap L'$ is maximal in $L'.$ Then $[L':Z(L)\cap L']=1$. So, $L/ (Z(L)\cap L')$ is $2$-step nilpotent, which implies that $L^2 \subseteq Z(L)$, a contradiction to given hypothesis.
\end{proof}

The proofs of the following three lemmas are straightforward.

\begin{lem}\label{prelem3}
Let $L$ be a finite-dimensional nilpotent Lie algebra over field $k=\mathbb{F}_q$ and $I \subseteq Z(L)\cap L'.$ If there exists $u_1, u_2, \cdots, u_n \in L$ such that $L'/I =\bigcup\limits_{i=1}^{n}[u_i+I, L/I]$ and $I \subseteq \bigcap\limits_{i=1}^{n}[u_i,L]$, then $L'= \bigcup\limits_{i=1}^{n}[u_i,L].$
\end{lem}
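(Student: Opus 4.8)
The plan is to use that each set $[u_i,L]$ is the image of the linear map $\ad u_i \colon L \to L$ and is therefore a genuine \emph{subspace} of $L'$, and that the hypothesis $I \subseteq [u_i,L]$ forces this subspace to be saturated with respect to $I$. Write $\pi \colon L \to L/I$ for the canonical projection (note $I$ is an ideal, being contained in $Z(L)$); it restricts to a surjection $L' \to L'/I$. Since $[u_i+I,\, x+I] = [u_i,x]+I = \pi([u_i,x])$ for every $x \in L$, the image of $\ad(u_i+I)$ on $L/I$ is exactly $\pi([u_i,L])$, i.e.\ $[u_i+I, L/I] = \pi([u_i,L])$. The first hypothesis therefore reads $L'/I = \bigcup_{i=1}^n \pi([u_i,L])$.

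The key observation I would establish next is the saturation identity $[u_i,L] = \pi^{-1}\big(\pi([u_i,L])\big)$ for each $i$. The inclusion $\subseteq$ is immediate. For $\supseteq$, suppose $\pi(v) \in \pi([u_i,L])$; then $v - w \in I$ for some $w \in [u_i,L]$, and since $I \subseteq [u_i,L]$ by the second hypothesis and $[u_i,L]$ is a subspace, we get $v = w + (v-w) \in [u_i,L]$.

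With saturation in hand, the conclusion follows by applying $\pi^{-1}$ and using that preimages commute with unions:
$$\bigcup_{i=1}^n [u_i,L] = \bigcup_{i=1}^n \pi^{-1}\big(\pi([u_i,L])\big) = \pi^{-1}\Big(\bigcup_{i=1}^n \pi([u_i,L])\Big) = \pi^{-1}(L'/I) = L',$$
the last equality because $I \subseteq L'$. The reverse containment $\bigcup_i [u_i,L] \subseteq L'$ is automatic since $[u_i,x] \in [L,L] = L'$. I expect no serious obstacle; the only point demanding care is that the second hypothesis must be read as $I \subseteq [u_i,L]$ for \emph{every} index $i$, since the saturation step has to be run separately for each subspace $[u_i,L]$. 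Finiteness of the ground field is not used.
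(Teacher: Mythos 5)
Your proof is correct: the paper states this lemma without proof (it is listed among results whose proofs are ``straightforward''), and your saturation argument --- using that each $[u_i,L]$ is a subspace containing $I$, hence equal to $\pi^{-1}\bigl(\pi([u_i,L])\bigr)$, and then pulling back the union --- is exactly the intended elementary verification. Your closing remarks are also on point: the hypothesis must indeed supply $I \subseteq [u_i,L]$ for each individual $i$, and the finiteness of $k$ plays no role.
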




\begin{lem}\label{prelem4}
Let $L$, a finite-dimensional Lie algebra, be a direct sum of Lie algebras $L_1$, $L_2.$ Then $w(L)=L'$ if and only if $w(L_1)=L_1'$ and $w(L_2)=L_2'.$ 
\end{lem}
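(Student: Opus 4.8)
The plan is to reduce to the plain bracket image and then exploit the direct-sum decomposition of the derived algebra. Since $w(x,y)=c[x,y]$ with $0\neq c\in k$ and $L'$ is a subspace, we have $w(L)=c\cdot\{[x,y]:x,y\in L\}$ together with $cL'=L'$, so $w(L)=L'$ if and only if $\{[x,y]:x,y\in L\}=L'$; the identical reduction applies to $L_1$ and $L_2$. Hence it suffices to work with the unweighted bracket image throughout. The first step is to record that $L=L_1\oplus L_2$ with $[L_1,L_2]=0$ forces $L'=L_1'\oplus L_2'$: expanding $[x_1+x_2,\,y_1+y_2]$ and using the vanishing of the cross terms gives $[L,L]=[L_1,L_1]+[L_2,L_2]$, and the sum is direct because $L_1'\subseteq L_1$, $L_2'\subseteq L_2$, and $L_1\cap L_2=0$.

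The key observation, which I would isolate next, is that for $x=x_1+x_2$ and $y=y_1+y_2$ with $x_i,y_i\in L_i$, the cross brackets vanish, so $[x,y]=[x_1,y_1]+[x_2,y_2]$. Thus every bracket in $L$ is a sum of a bracket from $L_1$ and a bracket from $L_2$, and conversely any such sum is realized by a single bracket in $L$.

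With these two facts the two implications are immediate. For the forward direction, assume $\{[x,y]\}=L'$ and take $z\in L_1'$. Writing $z=[x,y]=[x_1,y_1]+[x_2,y_2]$ and comparing components under the direct sum $L'=L_1'\oplus L_2'$, the $L_2'$-component must vanish, so $z=[x_1,y_1]$ lies in the bracket image of $L_1$; since the reverse containment is automatic, the bracket image of $L_1$ equals $L_1'$, and symmetrically for $L_2$. For the converse, assume the bracket images of $L_1$ and $L_2$ fill $L_1'$ and $L_2'$ respectively; given $z\in L'$, write $z=z_1+z_2$ with $z_i\in L_i'$, choose $x_i,y_i\in L_i$ with $z_i=[x_i,y_i]$, and set $x=x_1+x_2$, $y=y_1+y_2$ to get $[x,y]=z$ by the key observation.

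I do not expect a genuine obstacle here; the whole argument is bilinear expansion together with the vanishing of cross brackets. The only point that requires care is the directness of $L'=L_1'\oplus L_2'$, which is precisely what allows the forward direction to project a given bracket onto each summand and conclude that it is realized within a single factor.
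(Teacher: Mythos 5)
Your proof is correct, and the paper offers no argument of its own here (it simply declares this lemma straightforward); your reduction to the unweighted bracket image, the identity $[x_1+x_2,y_1+y_2]=[x_1,y_1]+[x_2,y_2]$, and the projection via $L'=L_1'\oplus L_2'$ constitute exactly the standard argument the authors are alluding to.
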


\begin{lem}\label{prelem5} Let $L$ be a finite-dimensional nilpotent Lie algebra over field $k$ and $I$ be an ideal of $L$ of dimension $1$ contained in $Z(L)$ such that $(L/I)'=w(L/I).$ Then each element of $L'$ is sum of at most two elements of $w(L).$
\end{lem}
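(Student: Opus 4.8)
The plan is to work with the canonical projection $\pi\colon L\to L/I$. Being a surjective homomorphism with kernel $I$, it satisfies $\pi(L')=(L/I)'$ and $\pi(w(L))=w(L/I)$, and $I=\ker\pi$ is central of dimension $1$. Given $z\in L'$, I would first observe that $\pi(z)\in (L/I)'=w(L/I)$, so there are $x,y\in L$ with $\pi(z)=c[\pi(x),\pi(y)]=\pi(c[x,y])$. Hence $z-c[x,y]\in\ker\pi=I$, and we may write $z=c[x,y]+i$ with $c[x,y]\in w(L)$ and $i\in I$.

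If $i=0$ then $z\in w(L)$ and we are done. Otherwise it suffices to show $i\in w(L)$, for then $z=c[x,y]+i$ exhibits $z$ as a sum of two elements of $w(L)$. Because $\dim I=1$ and $w(L)$ is closed under scalar multiplication (as $c[sx,y]=s\,c[x,y]$), the inclusion $I\subseteq w(L)$ follows as soon as $I$ contains a single nonzero value of the commutator map; equivalently, as soon as there exist $a,b\in L$ with $[a,b]\neq 0$ but $[\pi(a),\pi(b)]=0$. Since the residue $i$ can range over all of $I$ as $z$ ranges over $L'$ (take $z\in I$), the whole statement reduces to producing one nonzero commutator lying on the line $I$.

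This last reduction is the main obstacle, and it is exactly where the hypothesis $w(L/I)=(L/I)'$ must be used. My approach would be to fix a nonzero $i_0\in I$, write it as a sum of commutators $i_0=\sum_{j=1}^m c_j[a_j,b_j]$ (possible since $I\subseteq L'=[L,L]$), and then amalgamate: any partial sum, viewed in $L/I$, lies in $(L/I)'=w(L/I)$ and is therefore a single commutator there, whose lift agrees with the partial sum up to an element of $I$. Iterating should collapse the expression for $i_0$ down to a single commutator plus a central residue. The genuine difficulty is that a sum of two commutators need not be a single commutator, even when it is central, so the residues created by the amalgamation must be tracked and shown to cancel; controlling them is where $\dim I=1$ and the quotient hypothesis do the real work. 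If the sharp inclusion $I\subseteq w(L)$ proves too strong to reach directly, the same circle of ideas can instead be applied to a general $z\in L'$ by choosing a splitting $\pi(z)=\bar w_1+\bar w_2$ into two commutators of $L/I$ whose lifts in $w(L)$ already sum to $z$ on the nose, the freedom to adjust the commutator representatives modulo $I$ being what kills the unwanted $I$-component.
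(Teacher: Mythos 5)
Your first paragraph is exactly the paper's (unwritten) ``straightforward'' argument: project to $L/I$, use $(L/I)'=w(L/I)$ to write $v=c[x,y]+i$ with $i\in I$, and reduce to showing $i\in w(L)$. The problem is that you never establish this last step, and the route you propose for it --- proving $I\subseteq w(L)$ by exhibiting a single nonzero commutator inside $I$ --- cannot work under the hypotheses as literally stated. Take the paper's own $8$-dimensional $2$-step algebra of breadth type $(0,1,3)$ from Lemma \ref{2-step_dim8} and let $I=\langle [x,y]+[z,w]\rangle$: since $L'=Z(L)$ here, this is a $1$-dimensional central ideal contained in $L'$, and $\dim(L/I)'=3$ gives $(L/I)'=w(L/I)$ by Theorem \ref{dimL'3}; yet $I\cap w(L)=0$, precisely because $[x,y]+[z,w]\notin w(L)$ and $w(L)$ is closed under scalars. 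For the element $v=[x,y]+[z,w]$ your decomposition returns $v=0+v$ with $v\notin w(L)$, so the mechanism produces nothing (the conclusion is still true for this $v$, since it is visibly a sum of two commutators, but not by this argument). Your ``amalgamation'' fallback does not repair the gap: each re-lift of a partial sum creates a fresh residue in $I$ of exactly the same uncontrolled kind, and no cancellation is given.

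What actually closes the argument in every place the paper invokes this lemma is an observation you should make explicit. Either $I\cap L'=0$, in which case $v-c[x,y]\in I\cap L'=0$ and $v$ is a single element of $w(L)$; or $I$ is spanned by a single commutator $[a,b]$ --- as it is in all applications, e.g.\ $I=\langle u_5\rangle=\langle[u_2,u_3]\rangle$ in Lemma \ref{p6lem}, and in Lemma \ref{2-step_dim8} one is free to choose such an $I$ --- in which case $I=\{[\lambda a,b]:\lambda\in k\}\subseteq w(L)$ and $v=c[x,y]+i$ is the desired sum of two elements of $w(L)$. Without adding the assumption that $I\subseteq w(L)$ (equivalently, that $I$ meets $w(L)$ nontrivially), the reduction you carried out is a genuine gap, not a routine verification.
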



The following three results from \cite{misra2015} and \cite{ssk2021}, classify the breadths of Lie algebras.

\begin{thm}\label{breadth1_classify} From \cite[Theorem 2.3]{misra2015}, $b(L)=1$ if and only if $\dim L'=1.$
\end{thm}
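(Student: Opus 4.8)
The plan is to read the whole statement through the definition of breadth as $b(x)=\rank(\ad x)$, which turns the claim into a clean assertion about when \emph{every} map $\ad x\colon L\to L$ can simultaneously have rank at most $1$. The forward implication is immediate: if $\dim L'=1$, then for every $x\in L$ the image of $\ad x$ is $[x,L]\subseteq L'$, so $\rank(\ad x)\le \dim L'=1$ and hence $b(x)\le 1$. Since $L'\neq 0$ there exist $x,y$ with $[x,y]\neq 0$, so $\ad x\neq 0$ and $b(x)=1$; therefore $b(L)=1$. I would note that this direction uses only $\dim L'=1$, not nilpotency.

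For the converse I would argue by contrapositive, assuming $b(L)=1$ but $\dim L'\ge 2$ and producing an element of breadth $\ge 2$. The first step is to extract good witnesses. Because $b(L)=1$, each $[x,L]$ has dimension at most $1$, and since $L'=\sum_{x\in L}[x,L]$, not all of these lines can coincide (otherwise $L'$ would lie in a single line, contradicting $\dim L'\ge 2$). Hence there are $x_1,x_2$ for which $[x_1,L]$ and $[x_2,L]$ are two distinct nonzero lines; choosing $a,b$ with $u:=[x_1,a]$ and $v:=[x_2,b]$ spanning these lines gives linearly independent $u,v$. Since $[x_1,b]\in[x_1,L]=\langle u\rangle$ and $[x_2,a]\in[x_2,L]=\langle v\rangle$, I may write $[x_1,b]=\alpha u$ and $[x_2,a]=\beta v$ for scalars $\alpha,\beta$.

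The crux is then a three-line case analysis on $\alpha,\beta$. If $\beta\neq 0$, the elements $[a,x_1]=-u$ and $[a,x_2]=-\beta v$ are independent, so $b(a)\ge 2$. If $\alpha\neq 0$, symmetrically $[b,x_1]=-\alpha u$ and $[b,x_2]=-v$ are independent, so $b(b)\ge 2$. Finally, if $\alpha=\beta=0$, then $[x_1+x_2,a]=u$ and $[x_1+x_2,b]=v$ are independent, so $b(x_1+x_2)\ge 2$. In each case $b(L)\ge 2$, a contradiction; hence $\dim L'\le 1$, and since $b(L)=1$ forces $L'\neq 0$, we conclude $\dim L'=1$.

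The only genuinely delicate point — the step I would write out most carefully — is the reduction that produces a two-dimensional witness, i.e. the passage from the spanning description $L'=\sum_x[x,L]$ and the rank-$\le 1$ hypothesis to two distinct lines and hence an independent pair $u,v$; once those witnesses are fixed, the conclusion is forced purely by linear algebra. It is worth observing that the entire argument uses only finite-dimensionality, so nilpotency here serves merely to situate the result within the framework of \cite{misra2015} rather than being logically essential.
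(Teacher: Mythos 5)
Your proof is correct. There is nothing in the paper to compare it against: the statement is quoted from \cite[Theorem 2.3]{misra2015} and used as a black box, with no internal proof given. Your elementary argument is a sound, self-contained replacement. The forward direction is exactly the trivial observation that the image of $\ad x$ lies in $L'$, together with $L'\neq 0$ to get breadth exactly $1$ somewhere. For the converse, the reduction via $L'=\sum_x[x,L]$ (immediate from bilinearity of the bracket) to two elements $x_1,x_2$ whose bracket images are distinct lines is valid, and the three-way case split on the scalars $\alpha,\beta$ genuinely exhausts all possibilities and in each case produces a single element ($a$, $b$, or $x_1+x_2$) of breadth at least $2$, contradicting $b(L)=1$. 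Your closing remarks are also accurate: nilpotency is not logically needed, only that breadth and $\dim L'$ are finite, so your argument is if anything slightly more general than the cited source's setting. The only thing your write-up buys the paper beyond the citation is self-containedness; what it costs is a page of routine linear algebra that the authors evidently preferred to outsource.
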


\begin{thm}\label{breadth2_classify} From \cite[Theorem 3.1]{misra2015}, let $L$ be a finite-dimensional nilpotent Lie algebra over field $k$ of characteristic not equal to $2.$ Then $b(L)=2$ if and only if one of the following conditions holds:
\begin{itemize}
\item[(1)]$\dim L'=2$, or
\item[(2)]$\dim L'=3$ and $\dim (L/Z(L))=3.$
\end{itemize}
\end{thm}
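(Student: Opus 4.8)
The plan is to prove both implications, with the reverse (``only if'') direction carrying essentially all of the difficulty. For the forward direction I would argue from the two basic inequalities governing breadth. Since $\ad x$ maps $L$ into $L'$ we always have $b(x)=\rank(\ad x)=\dim[x,L]\le\dim L'$, and since $Z(L)\subseteq C_L(x)$ we also have $b(x)=\codim C_L(x)\le\dim(L/Z(L))$. If $\dim L'=2$, the first inequality gives $b(L)\le 2$, while Theorem \ref{breadth1_classify} rules out $b(L)\le 1$ (as $\dim L'\notin\{0,1\}$); hence $b(L)=2$. If instead $\dim L'=3$ and $\dim(L/Z(L))=3$, the second inequality gives $b(x)\le 3$, and the extremal case $b(x)=3$ would force $C_L(x)=Z(L)$; but $x\in C_L(x)$, so this would put $x\in Z(L)$ and hence $b(x)=0$, a contradiction. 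Thus $b(x)\le 2$ for every $x$, and Theorem \ref{breadth1_classify} again upgrades $b(L)\le 2$ to $b(L)=2$.

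For the converse, assume $b(L)=2$. Theorem \ref{breadth1_classify} gives $\dim L'\ge 2$ at once, so it remains to show $\dim L'\le 3$ and that $\dim L'=3$ forces $\dim(L/Z(L))=3$. Since $\dim L'$, $b(L)$ and $\dim(L/Z(L))$ are all unchanged under extension of scalars, I would pass to $\bar k$ and reformulate linear-algebraically. Writing $V=L/Z(L)$ and $W=L'$, the bracket descends to an alternating map $\beta\colon V\times V\to W$ whose image spans $W$ and which has trivial radical, and $b(x)$ is exactly the rank of $\beta(\bar x,-)\colon V\to W$. Dualizing, $W^\ast$ embeds as a subspace $B\subseteq\Lambda^2V^\ast$ with $\dim B=\dim L'$, and for each $v$ one has $b(v)=\dim\iota_v(B)$, where $\iota_v$ denotes contraction and $\iota_v(B)=\{\iota_v b:b\in B\}\subseteq V^\ast$. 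The hypothesis $b(L)=2$ now reads: $\dim\iota_v(B)\le 2$ for every $v$, with equality for some $v$.

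The engine of the proof is the following consequence, valid as soon as $\dim L'=\dim B\ge 3$: for every $v$ the contraction $b\mapsto\iota_v b$ from $B$ to $V^\ast$ has rank $\le 2<\dim B$, hence a nonzero kernel, so $v$ lies in the radical of some nonzero $b\in B$. In other words $V=\bigcup_{0\ne b\in B}\operatorname{rad}(b)$, i.e. $V$ is covered by the radicals of the forms in the pencil $\mathbb{P}(B)$. I would then analyze this covering geometrically over $\bar k$. A nonzero alternating form has radical of codimension $\ge 2$, and the decomposable (rank $2$) forms, whose radicals are largest, form a proper subvariety of $\mathbb{P}(B)$ cut out by the Plücker/Pfaffian equations. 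A dimension count on the incidence variety $\{(v,b):v\in\operatorname{rad}(b)\}$ shows the covering can succeed only if a form of rank exactly $2$ is present with radical as large as possible; pushing this, for $\dim V\ge 4$ one is reduced to whether a $\ge 3$-dimensional space of decomposable forms with trivial common radical can cover $V$. For $\dim V=4$ this is settled by the Klein quadric $G(2,V^\ast)$: the only linear $\mathbb{P}^2$'s it contains are the two rulings, and for each the union of radicals is a proper subspace (the $\alpha$-planes) or the common radical is nonzero (the $\beta$-planes), so the covering fails. Hence $\dim V\le 3$; since $\dim L'\le\binom{\dim V}{2}$ this yields $\dim L'\le 3$, and when $\dim L'=3$ it forces $\dim V=3$, that is $\dim(L/Z(L))=3$.

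I expect the covering analysis to be the main obstacle: making the dimension count rigorous and, above all, ruling out a decomposable space of forms covering $V$ for every $\dim V\ge 4$, not only $\dim V=4$. The hypothesis $\operatorname{char}k\ne 2$ enters precisely here, in identifying the commutator pairing with a genuine alternating form and in the rank theory of such forms. I would handle general $\dim V$ either by an inductive symplectic-type reduction modulo a breadth-$2$ element $v_0$ (replacing $V$ by $C_V(v_0)/\langle v_0\rangle$) or by a uniform bound on the dimension of the decomposable locus in $\mathbb{P}(B)$, and I anticipate that most of the technical work lives in this step.
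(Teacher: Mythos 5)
This theorem is not proved in the paper at all; it is quoted verbatim from \cite[Theorem 3.1]{misra2015}, so there is no in-paper argument to compare your attempt against. Judged on its own merits, your forward direction is complete and correct: the two inequalities $b(x)\le\dim L'$ and $b(x)\le\dim(L/Z(L))$, together with Theorem \ref{breadth1_classify} and the observation that $b(x)=\dim(L/Z(L))$ would force $x\in C_L(x)=Z(L)$ and hence $b(x)=0$, settle both cases. Your translation of the converse is also correct: $B\cong (L')^{\ast}$ embeds in $\Lambda^2V^{\ast}$ with trivial common radical, $b(v)=\dim\iota_v(B)$, and for $\dim B\ge 3$ the hypothesis $b(L)=2$ forces $V=\bigcup_{0\ne b\in B}\operatorname{rad}(b)$.

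The converse, however, is the entire content of the theorem, and it is not actually established. Two concrete gaps. First, your opening claim that $b(L)$ is unchanged under extension of scalars is unjustified and is precisely the delicate point over the finite fields this paper works with: $b(L)$ is the maximal rank in a linear family of matrices, and over $\mathbb{F}_q$ with $q$ small the generic rank need not be attained at a rational point, so a priori $b(L\otimes_k\bar k)>b(L)$ is possible; in that event your covering hypothesis $\dim\iota_v(B)\le 2$ is only available for $k$-rational $v$, and the dimension counts over $\bar k$ do not apply. Second, you rule out the covering only in the single case $\dim V=4$, $\dim B=3$ (via the two rulings of the Klein quadric) and explicitly defer the cases $\dim B\ge 4$ and $\dim B=3$, $\dim V\ge 5$ to an unspecified induction or bound; but these deferred cases are exactly where the theorem lives. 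Moreover, the reduction ``only decomposable forms matter'' is itself unjustified once $\dim V\ge 6$, since rank-$4$ forms then have radicals of dimension $\ge 2$ and a large enough family of them passes the naive incidence-variety dimension count; and for $\dim B\ge 4$ the relevant condition is the stronger $\dim\ker(\iota_v|_B)\ge\dim B-2$, which cannot simply be pushed down to $3$-dimensional subspaces of $B$ because the trivial-common-radical hypothesis is lost under restriction. As it stands the proposal proves the easy implication and one low-dimensional instance of the hard one; you would need to carry out the symplectic-reduction induction in full, or follow the case analysis of \cite{misra2015}, to close it.
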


\begin{thm}\label{breadth3_classify} From \cite[Theorem 3.1]{ssk2021}, let $L$ be a finite-dimensional nilpotent Lie algebra over $k=\mathbb{F}_q.$ Then $b(L)=3$ if and only if one of the following holds:
\begin{itemize}
\item[(i)] $\dim (L')=3$ and $[L:Z(L)] \geq 4.$
\item[(ii)] $\dim (L')\geq 4$ and $[L:Z(L)] = 4.$
\item[(iii)] $\dim (L')=4$ and there exists an ideal $I \subseteq Z(L)$ of $L$ with $\dim I=1$ and $[L/I:Z(L/I)] = 3.$
\end{itemize}
\end{thm}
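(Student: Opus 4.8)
The plan is to prove both implications from three elementary breadth bounds together with the already-established classifications for $b(L)\le 2$ (Theorems \ref{breadth1_classify} and \ref{breadth2_classify}). First I would record the basic facts $b(x)=\dim[x,L]\le\dim L'$ and, for $x\notin Z(L)$, the containment $C_L(x)\supseteq Z(L)+kx$, which forces $b(x)\le [L:Z(L)]-1$ and hence $b(L)\le [L:Z(L)]-1$; I would also note the surjection $\Lambda^2(L/Z(L))\twoheadrightarrow L'$, giving $\dim L'\le\binom{[L:Z(L)]}{2}$. The other workhorse is the quotient formula: for a one-dimensional central ideal $I\subseteq L'$ and $\bar x\in L/I$ one has $b_{L/I}(\bar x)=b_L(x)-\dim([x,L]\cap I)$, whence $b(L/I)\le b(L)\le b(L/I)+1$.

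For the reverse direction I would treat (i), (ii), (iii) separately. In case (i), $b(L)\le\dim L'=3$, while Theorem \ref{breadth2_classify} rules out $b(L)=2$ because $[L:Z(L)]\ge 4\neq 3$, and $\dim L'=3$ rules out $b(L)\le 1$; hence $b(L)=3$. In case (ii), $[L:Z(L)]=4$ gives $b(L)\le 3$ from the centralizer bound, while $\dim L'\ge 4$ again forbids $b(L)\le 2$, so $b(L)=3$. In case (iii) I would pass to $L/I$: since $I\subseteq Z(L)\subseteq L'$ by the stem hypothesis, we get $\dim (L/I)'=3$ and $[L/I:Z(L/I)]=3$, so $b(L/I)=2$ by Theorem \ref{breadth2_classify}(2); the quotient formula then yields $b(L)\le b(L/I)+1=3$, and $\dim L'=4$ forbids $b(L)\le 2$, forcing $b(L)=3$.

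For the forward direction, assume $b(L)=3$. The two bounds immediately give $\dim L'\ge 3$ and $[L:Z(L)]\ge 4$. If $\dim L'=3$ we are in case (i); if $\dim L'\ge 4$ and $[L:Z(L)]=4$ we are in case (ii). The remaining, and genuinely hard, case is $\dim L'\ge 4$ together with $[L:Z(L)]\ge 5$, where I must land in case (iii). The natural strategy is to view the commutator as an alternating map $\beta\colon V\times V\to L'$ with $V=L/Z(L)$, surjective and with every partial map $\beta(\bar x,-)$ of rank $\le 3$; selecting $I$ then amounts to choosing a line in $L'$ so that the induced form $\bar\beta$ into $L'/I$ has radical of codimension exactly $3$ in $V$, which is precisely $[L/I:Z(L/I)]=3$.

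The main obstacle is exactly this structural step, in two parts: one must first show that the rank-$\le 3$ condition together with $[L:Z(L)]\ge 5$ forces $\dim L'=4$ (it cannot be $5$ or $6$), and then exhibit a line $I\subseteq L'$ whose annihilator $\{\bar x\in V:\beta(\bar x,-)\subseteq I\}$ has codimension $3$. I expect this to require a careful analysis of the pencil of alternating forms $\{\beta(\bar x,-)\}$ and their common degeneracies, which is the technical heart of the cited classification; Lemma \ref{maximal_nilpotency_class} and the lower breadth classifications would be used along the way to exclude the degenerate configurations.
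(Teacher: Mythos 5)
The paper does not prove this statement at all: it is imported verbatim as \cite[Theorem 3.1]{ssk2021}, so there is no in-paper argument to compare yours against. Judged on its own, your reverse implication is correct and essentially complete: the bounds $b(L)\le\dim L'$ and $b(L)\le[L:Z(L)]-1$, the quotient identity $b_{L/I}(\bar x)=b_L(x)-\dim([x,L]\cap I)$, and the exclusion of $b(L)\le 2$ via Theorems \ref{breadth1_classify} and \ref{breadth2_classify} do settle cases (i), (ii), (iii). Two small remarks there: Theorem \ref{breadth2_classify} carries a characteristic-$\neq 2$ hypothesis that the statement under review does not, so your exclusion of $b(L)=2$ silently restricts to odd characteristic; and in case (iii) you do not need any stem hypothesis to get $I\subseteq L'$ --- it is forced, since $\dim(L/I)'\le\binom{3}{2}=3<4=\dim L'$ already implies $I\cap L'\neq 0$.

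The genuine gap is the one you yourself flag: the forward direction in the regime $b(L)=3$, $\dim L'\ge 4$, $[L:Z(L)]\ge 5$. Everything else in the theorem is dimension counting; this case is the entire mathematical content, and your proposal replaces it with a statement of intent ("I expect this to require a careful analysis of the pencil of alternating forms"). Concretely, two nontrivial facts are asserted without proof: first, that the condition $\dim[x,L]\le 3$ for all $x$, combined with $[L:Z(L)]\ge 5$, forces $\dim L'=4$ exactly (note that your bound $\dim L'\le\binom{[L:Z(L)]}{2}$ gives nothing useful here, e.g.\ it allows $\dim L'=10$ when $[L:Z(L)]=5$); second, that one can then find a line $I\subseteq Z(L)\cap L'$ whose "annihilator" $\{x:[x,L]\subseteq I\}$ has codimension exactly $3$. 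Neither follows from the lemmas you invoke, and the second in particular is where the finiteness of $\mathbb{F}_q$ and the interplay between the three-dimensional images $[x,L]$ inside the four-dimensional $L'$ must be exploited. As it stands the proposal proves the easy half of an if-and-only-if and correctly locates, but does not close, the hard half.
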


\begin{rem}\label{remark} Let $L$ be a finite-dimensional nilpotent Lie algebra over $\mathbb{F}_q$ with $\dim(L')=4.$ Then by the above theorems it follows that $b(L) \geq 3.$ We will use this information throughout the following without reference.
\end{rem}

The following theorem describes that every finite-dimensional nilpotent Lie algebra with $1$-dimensional derived Lie subalgebra, is Heisenberg Lie algebra up to isoclinism. 

\begin{thm}\label{heisenberg_thm} Let $L$ be an $n$-dimensional nilpotent Lie algebra over field $k$ with $b(L)=1.$ Then $L$ has a basis $\{x_1,y_1,x_2,y_2,\cdots,x_m,y_m,z_1, \cdots, z_{n-2m} \}$ with $[x_i,y_j]=\delta_{ij}z_1$ and $[z_j,L]=0.$ 
\end{thm}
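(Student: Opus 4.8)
The plan is to reduce the statement to the classification of alternating bilinear forms. First I would invoke Theorem \ref{breadth1_classify} to replace the hypothesis $b(L)=1$ by the more usable condition $\dim L'=1$. Writing $L' = kz_1$ for a fixed nonzero $z_1$, I would next observe that $z_1$ is central: since $L$ is nilpotent its lower central series strictly decreases until it reaches $0$, so the one-dimensionality of $L^1=L'$ forces $L^2=[L,L']=0$. Hence $[L,z_1]=0$, i.e. $z_1\in Z(L)$, and $L$ is at most $2$-step nilpotent.

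The key reformulation is to encode the bracket as a bilinear form. Because $L'=kz_1$, every bracket can be written uniquely as $[x,y]=\beta(x,y)\,z_1$, and this defines a map $\beta\colon L\times L\to k$ that is bilinear (the bracket is bilinear) and alternating (from $[x,x]=0$). Its radical $\{x : \beta(x,y)=0 \text{ for all } y\}$ is exactly $Z(L)$, so $\beta$ descends to a nondegenerate alternating form $\bar\beta$ on the quotient $V=L/Z(L)$.

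At this point I would apply the structure theorem for alternating bilinear forms: a finite-dimensional space carrying a nondegenerate alternating form has even dimension $2m$ and admits a symplectic basis $\bar x_1,\bar y_1,\ldots,\bar x_m,\bar y_m$ with $\bar\beta(\bar x_i,\bar y_j)=\delta_{ij}$ and $\bar\beta(\bar x_i,\bar x_j)=\bar\beta(\bar y_i,\bar y_j)=0$. Lifting these vectors arbitrarily to elements $x_i,y_i\in L$ gives $[x_i,y_j]=\delta_{ij}z_1$ together with $[x_i,x_j]=[y_i,y_j]=0$. Extending $z_1$ to a basis $z_1,z_2,\ldots,z_{n-2m}$ of $Z(L)$ and combining with these lifts produces the claimed basis of $L$, with $[z_j,L]=0$ automatic from centrality.

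The computations here are entirely routine; the one point that deserves care is that I use that $\beta$ is \emph{alternating} rather than merely skew-symmetric, which is what makes the symplectic normal form available over an arbitrary field $k$ (including characteristic $2$) and so matches the stated generality. The only genuine structural input is the identification of the radical of $\beta$ with $Z(L)$, which transfers the problem cleanly to linear algebra on $L/Z(L)$; everything else follows from the standard normal form, so I do not anticipate a substantive obstacle.
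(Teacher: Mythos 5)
Your argument is correct and complete. The paper itself gives no proof of this theorem --- it is stated as a known structure result (the remark preceding it just says that such an $L$ is a Heisenberg Lie algebra up to isoclinism), so there is no in-paper argument to compare against; what you have written is the standard proof. The chain of reductions is sound: $b(L)=1$ gives $\dim L'=1$ via Theorem \ref{breadth1_classify}; nilpotency forces $L^2=[L,L']=0$ (otherwise the lower central series would stabilize at the nonzero $L^1$), so $z_1\in Z(L)$; the induced form $\bar\beta$ on $L/Z(L)$ is alternating (from the Lie-algebra axiom $[x,x]=0$, which correctly covers characteristic $2$) and nondegenerate since its radical is exactly $Z(L)$; and the symplectic normal form then yields the stated basis, with $\dim Z(L)=n-2m$ accounting for the $z_j$'s. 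You even establish the extra relations $[x_i,x_j]=[y_i,y_j]=0$ beyond what the statement records.
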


\begin{lem}\label{ext_lemma} Let $L$ be a finite-dimensional nilpotent Lie algebra over field $k$ and $I$ be a central ideal of $L$ such that $[L/I:Z(L/I)]=3$, then

{\it{(a)}} If  $\frac{L/I}{Z(L/I)}$ is a $3$-generator Lie algebra, then we can choose except three generators $x,y,z$ (say) of $L$, all other generators $u_1, u_2, \cdots, u_n, n \geq 0$, so that $[u_i, L] \subseteq I.$ If $\dim I=1$, then $[u_i, L]=I.$ Furthermore, $L/I$ is $2$-step nilpotent.

{\it{(b)}} If  $\frac{L/I}{Z(L/I)}$ is a $2$-generator Lie algebra, then  we can choose, except two generators $x,y$ (say) of $L$, all other generators $u_1, u_2, \cdots, u_n, n \geq 0$, in such a way that $[u_i, L] \subseteq I.$ If $\dim I=1$, then $[u_i, L]=I.$
\end{lem}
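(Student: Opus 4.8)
The plan is to analyze everything in the quotient $\bar L := L/I$ and then transport the conclusion back to $L$ using the stem hypothesis. Write $\bar x$ for the image of $x\in L$ in $\bar L$. By hypothesis $\dim\big(\bar L/Z(\bar L)\big)=3$, and since a quotient of a nilpotent Lie algebra is nilpotent, $\bar L/Z(\bar L)$ is one of the only two $3$-dimensional nilpotent Lie algebras: the abelian one (which needs $3$ generators) or the $3$-dimensional Heisenberg algebra (which needs $2$ generators). This is exactly the dichotomy of the two cases: in (a) $\bar L/Z(\bar L)$ is abelian, whence $[\bar L,\bar L]\subseteq Z(\bar L)$ and $\bar L=L/I$ is $2$-step nilpotent, giving the final assertion of (a); in (b) it is Heisenberg.

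Next I would fix lifts $x,y,z$ (resp. $x,y$) of a generating set of $\bar L/Z(\bar L)$ and set $S=\langle \bar x,\bar y,\bar z\rangle$ (resp. $\langle \bar x,\bar y\rangle$). Since these images generate $\bar L/Z(\bar L)$ we have $S+Z(\bar L)=\bar L$, and because $Z(\bar L)$ is central this forces $[\bar L,\bar L]=[S,S]\subseteq S$, i.e. $\bar L'\subseteq S$. Consequently $\bar L/S\cong Z(\bar L)/(Z(\bar L)\cap S)$, so I can choose central elements $\bar u_1,\dots,\bar u_n\in Z(\bar L)$ whose images form a basis of this quotient; then $\{\bar x,\bar y,\bar z,\bar u_1,\dots,\bar u_n\}$ (resp. the same set without $\bar z$) generates $\bar L$. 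A short linear-algebra check — project to $\bar L/Z(\bar L)$ to isolate the $\bar x,\bar y,\bar z$ part, then use $\bar L'\subseteq S$ together with the independence of the $\bar u_i$ modulo $Z(\bar L)\cap S$ — shows these elements are independent modulo $\bar L'$, so they form a \emph{minimal} generating set of $\bar L$. By construction each $\bar u_i$ is central in $\bar L$, i.e. $[\bar u_i,\bar L]=0$, which pulls back to $[u_i,L]\subseteq I$.

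To pass back to $L$, I would use the stem hypothesis: $I\subseteq Z(L)\subseteq L'=F(L)$, the Frattini subalgebra. Since $I\subseteq F(L)$ and $F(L)$ consists of non-generators, any subset of $L$ whose image generates $L/I$ already generates $L$; hence a minimal generating set of $\bar L$ lifts to a minimal generating set of $L$. Thus $\{x,y,z,u_1,\dots,u_n\}$ (resp. $\{x,y,u_1,\dots,u_n\}$) is a minimal generating set of $L$ with $[u_i,L]\subseteq I$, which is the main assertion. For the refinement when $\dim I=1$: each $u_i$ is a genuine generator, so $u_i\notin F(L)=L'\supseteq Z(L)$; thus $u_i\notin Z(L)$ and $[u_i,L]\neq0$. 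As $[u_i,L]$ is a subspace of the $1$-dimensional space $I$, it must equal $I$.

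The only genuinely delicate points are the two structural observations: first, that generating $\bar L$ \emph{modulo its center} forces $\bar L'\subseteq S$ (so that the extra generators really can be taken central), and second, the clean use of the stem condition $Z(L)\subseteq L'=F(L)$ both to lift generating sets from $L/I$ and to guarantee $u_i\notin Z(L)$, which is what upgrades $[u_i,L]\subseteq I$ to $[u_i,L]=I$. Everything else is routine bookkeeping with the lower central series and a dimension count.
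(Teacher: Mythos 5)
Your proposal is correct and follows essentially the same route as the paper: pass to $\bar L=L/I$, use the dichotomy that the $3$-dimensional nilpotent algebra $\bar L/Z(\bar L)$ is either abelian ($3$-generator, forcing $L/I$ to be $2$-step) or Heisenberg ($2$-generator), choose the remaining generators inside $Z(L/I)$ so that $[u_i,L]\subseteq I$, and then use $I\subseteq Z(L)\subseteq L'=F(L)$ both to lift the generating set and to conclude $u_i\notin Z(L)$, hence $[u_i,L]=I$ when $\dim I=1$. Your write-up actually supplies details the paper leaves implicit (why the extra generators can be taken central via $\bar L=S+Z(\bar L)$ and $\bar L'\subseteq S$, and the minimality check), but the underlying argument is the same.
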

\begin{proof}
Since  $\dim \frac{L/I}{Z(L/I)}=3$, the minimal number of generators of $\frac{L/I}{Z(L/I)}$ can be $2$ or $3.$
 {\it{(a)}} If $\frac{L/I}{Z(L/I)}$ is a $3$-generator Lie algebra, say $\bar{x},\bar{y},\bar{z}$ be $3$-generators that do not belong to $Z(L/I)$, and $\bar{u_1}, \bar{u_2}, \cdots, \bar{u_n}, n \geq 0$ be other generators of $(L/I)$ that belong to $Z(L/I).$ we can choose unique representatives of pre-images of each of these generators in natural map $L \longrightarrow L/I.$ Since $I \subset Z(L) \subset L'$, we have $x,y,z, u_1, u_2, \cdots, u_n, n \geq 0$ as generators of $L$ such that $[u_i, L] \subseteq I.$ Observe that $u_i \notin Z(L)\subset L'$ as $u_i$'s are generators of $L.$ $0 \neq [u_i,L]\subseteq I$ and $\dim I=1$ implies $[u_i,L]= I.$ Since $\frac{L/I}{Z(L/I)}$ is a $3$-generator Lie algebra of dimension $3$, so $(\frac{L/I}{Z(L/I)})'=0$, i.e., $L'/I \subseteq Z(L/I)$ which implies that $L/I$ is $2$-step nilpotent.
 
 {\it{(b)}} If $\frac{L/I}{Z(L/I)}$ is $2$-generator Lie algebra, then using similar arguments as above, the proof follows.
\end{proof}
The following theorem takes the edge off our study to Lie algebras of small dimensions.
\begin{thm}\label{keyresult} Let $L$ be a finite-dimensional nilpotent Lie algebra over field $k= \mathbb{F}_q$ of odd characteristic, with $Z(L) \subseteq L'$, $\dim L'= 4$ and $b(L)=3.$ If $L$ is $3$-step nilpotent, then one of the following holds:
\begin{enumerate}
\item[(i)] There exists a $6$-dimensional ideal $M$ of $L$, generated by $2$-elements such that $M'=L'$ and nilpotency class of $M$ is same as that of $L$. If $\dim L \geq 7$, then $L=M+N$ where $N$ is the Lie subalgebra of $L$ with $ \dim N' \leq 1.$ Further, if $N$ is non-abelian, it is isoclinic to the Heisenberg Lie algebra over $k.$ 
\item[(ii)] There exists a $5$-dimensional ideal $M$ of $L$, generated by $2$-elements such that $M'\subset L'$ and nilpotency class of $M$ is same as that of $L$. If $\dim L \geq 7$, then $L$ is the central product of $M$ and a $2$-step nilpotent Lie subalgebra $N$ that is isoclinic to the Heisenberg Lie algebra over $k.$ 
\item[(iii)] There exists a $7$-dimensional ideal $M$ of $L$, generated by $3$-elements such that $M'= L'$ and nilpotency class of $M$ is same as that of $L$. If $\dim L \geq 8$, then $L=M+N$ where $N$ is the Lie subalgebra with $ \dim N' \leq 1.$ Further, if $N$ is non-abelian, it is isoclinic to the Heisenberg Lie algebra over $k.$ 
\end{enumerate} 
If $L$ is $4$-step nilpotent, then only $(i)$ holds. 
\end{thm}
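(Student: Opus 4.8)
The plan is to run the whole argument through the breadth-$3$ classification (Theorem \ref{breadth3_classify}) and the extension Lemma \ref{ext_lemma}, using the number of ``essential'' generators to decide between the three conclusions. Since $\dim L'=4$ and $b(L)=3$, Theorem \ref{breadth3_classify} leaves exactly two live cases: either $[L:Z(L)]=4$, or there is a one-dimensional central ideal $I\subseteq Z(L)$ with $[L/I:Z(L/I)]=3$ (its case (i) is excluded because it needs $\dim L'=3$). In the second situation I would feed $I$ into Lemma \ref{ext_lemma}: if $(L/I)/Z(L/I)$ is $3$-generated, part (a) supplies essential generators $x,y,z$ together with further generators $u_1,\dots,u_n$ satisfying $[u_i,L]=I$, and tells us $L/I$ is $2$-step so that $L^3\subseteq[L,I]=0$, i.e.\ $L$ is $3$-step; if it is $2$-generated, part (b) gives essential generators $x,y$ with the same control $[u_i,L]=I$ on the remaining generators. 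The case $[L:Z(L)]=4$ I would treat by inspecting $\bar L=L/Z(L)$, a $4$-dimensional nilpotent algebra with $\bar L'\cong L'/Z(L)$ (using $Z(L)\subseteq L'$); it is either $\mathfrak{h}_3\oplus k$ ($3$-generated) or the $4$-dimensional filiform algebra ($2$-generated), the filiform case being the one that can carry class $4$.

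In each branch I would set $M$ to be the subalgebra generated by the essential generators and then compute $M'$ and $\dim M$. The recurring mechanism is that every bracket involving some $u_i$ lands in $I$, so $L'/I$ is spanned by the brackets of the essential generators alone; this pins $\dim(L'/I)=3$ and forces the essential generators to be independent modulo $L'$. When the essential generators already produce all of $L'$ (equivalently $I\subseteq M'$), one gets $M'=L'$, with $\dim M=6$ in the $2$-generated case and $\dim M=7$ in the $3$-generated case, giving conclusions (i) and (iii) respectively; here $L'=M'\subseteq M$ makes $M$ an ideal. When instead $\langle x,y\rangle'$ is only $3$-dimensional and misses $I$, one lands in conclusion (ii): $M$ is $5$-dimensional with $M'\subsetneq L'$, and the missing direction $I$ is supplied externally through the central product.

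To produce the complement I would collect the non-essential generators $u_i$ (with the relevant part of $L'$) into a subalgebra $N$; since $[u_i,L]\subseteq I$ is at most one-dimensional, $\dim N'\le 1$ and $L=M+N$. When $N$ is non-abelian we have $\dim N'=1$, so $b(N)=1$ and Theorem \ref{heisenberg_thm} identifies $N$ as isoclinic to the Heisenberg Lie algebra; in conclusion (ii) the relations $[M,N]=0$ and $M\cap N\subseteq Z(L)$ upgrade the sum to a central product. A bookkeeping check that all cross brackets $[M,N]$ lie in $I$ (or vanish) then yields $L^{j}=M^{j}$ for every $j$, so the nilpotency class of $M$ equals that of $L$, as required.

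Finally I would establish the $4$-step restriction. Conclusion (iii) arises only from Lemma \ref{ext_lemma}(a), whose verdict that $L/I$ is $2$-step forces $L$ to be at most $3$-step, so (iii) cannot occur in the $4$-step case. For (ii), a $4$-step $M$ of dimension $5$ with $\dim M'=3$ must be the $5$-dimensional filiform algebra, which has breadth $3$; combined with a non-abelian Heisenberg factor $N$ of breadth $1$ in the central product, the decomposition $[m+n,L]=[m,M]+[n,N]$ with $M'\cap I=0$ would exhibit an element of breadth $3+1=4$, contradicting $b(L)=3$. Hence a $4$-step $L$ must satisfy (i). The step I expect to be the main obstacle is the second one: determining $M'$ precisely, and above all deciding whether $I\subseteq M'$ (equivalently $I=L^2\subseteq M^2$). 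This forces one to re-choose the essential generators so that the double bracket generating $L^2$ is visible among $x,y$ (and possibly $z$) rather than hidden in some $[u_i,\,\cdot\,]$, and to verify the relevant brackets are independent modulo $I$; this is where the bulk of the casework, and the hypotheses that $\operatorname{char}k$ is odd and $k$ is finite, will be used.
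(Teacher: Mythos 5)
Your overall route is the same as the paper's: reduce via Theorem \ref{breadth3_classify} to the two cases $[L:Z(L)]=4$ or the existence of a one-dimensional central ideal $I$ with $[L/I:Z(L/I)]=3$, feed $I$ into Lemma \ref{ext_lemma} to isolate two or three ``essential'' generators, let $M$ be the subalgebra they generate and $N$ the subalgebra generated by the remaining generators $u_i$ with $[u_i,L]=I$, and invoke Theorem \ref{heisenberg_thm} for $N$. The paper organizes the casework by the position of $I$ relative to $L^2$ (namely $I=L^2$, $I\subsetneq L^2$, $I\not\subseteq L^2$) rather than by your criterion $I\subseteq M'$, but in substance these coincide.

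The genuine gap is your claim that when $\langle x,y\rangle'$ is $3$-dimensional and misses $I$, ``one lands in conclusion (ii)'' with $I$ supplied externally through a central product. A central product requires $[u_i,M]=0$ for \emph{every} $i$, and Lemma \ref{ext_lemma} only gives $[u_i,L]\subseteq I$; some $u_i$ may well satisfy $[u_i,M]=I\neq 0$, in which case your $N$ does not centralize $M$ and conclusion (ii) fails for this decomposition. The correct outcome there is conclusion (iii): one absorbs such a $u_i$ into $M$, obtaining the $7$-dimensional, $3$-generated ideal $\langle x,y,u_i\rangle$ whose derived algebra is $\langle x,y\rangle'+I=L'$. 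This is precisely the dichotomy the paper carries out in its case $I\not\subseteq L^2$, and your argument needs it. Two smaller points: the assertion $L^{j}=M^{j}$ for every $j$ already fails at $j=1$ in case (ii) (where $M'\subsetneq L'$); equality of nilpotency classes instead comes from $L^2=M^2$, since all brackets involving $N$ land in the central ideal $I$. On the other hand, your breadth-counting exclusion of (ii) in the $4$-step case is a genuinely different (and valid) device from the paper's, which instead shows $I$ must equal $L^3$ there and derives (i) directly.
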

\begin{proof}
Since $b(L)=3$ and $\dim L'=4$, by Theorem \ref{breadth3_classify}, either $[L: Z(L)]=4$ or there exists a one-dimensional ideal $I$ of $L$ such that $[L/I:Z(L/I)] = 3.$
\noindent
If $[L: Z(L)]=4$ then minimal number of generators of $L$ can be $2,3$ or $4.$ If $L$ is $4$-generator Lie algebra then $L'=z(L)$, i.e.,  $L$ is $2$-step nilpotent, which is contradiction to given hypothesis.

  If $L$ is a $2$-generator Lie algebra, then $L$ itself is $6$-dimensional satisfying (i). If $L$ is a $3$-generator Lie algebra, then $L$ itself is $7$-dimensional satisfying (iii). Further, $[L':Z(L)]=1$, so $L$ is at most $3$-step nilpotent by Lemma \ref{maximal_nilpotency_class}. But it cannot be $2$-step as otherwise $L' \subseteq Z(L).$ Hence, $L$ is $3$-step nilpotent.

Now, we will consider the other case when there exists $1$-dimensional ideal $I$ of $L$ with $[L/I:Z(L/I)] = 3.$ Then, the possible minimal number of generators of $\frac{L/I}{Z(L/I)}$ can be $2,3.$ We will discuss these cases here. First, assume that $L$ is $3$-step nilpotent. Then one of the following holds:

 (a) $ I= L^2$ \hspace{2cm}
(b) $ I\subsetneq L^2$ 
\hspace{2cm} (c) $ I \nsubseteq L^2.$

{\textbf{\it (a)}} $I= L^2$, i.e., $L/I$ is $2$-step nilpotent, which implies $\frac{L/I}{Z(L/I)}$ is 3-generator Lie algebra. Indeed, if $\frac{L/I}{Z(L/I)}$ is 2-generator Lie algebra, then $L'/I=L'/L^2$ is $1$-generator Lie algebra, so $\dim L'/L^2=1$ and $\dim L^2= \dim I=1$ implies $\dim L'=2$, which is contradiction to given hypothesis. By Lemma \ref{ext_lemma}, it follows that, except for three generators $x,y,z$ of $L$, all other generators $u_1,u_2, \cdots, u_n, n \geq 0$, are such that $[u_i,L] = I$ for all $1 \leq i \leq n.$
 
Set $M:=<x,y,z>$ and $N:=\left\langle u_1,u_2,\cdots, u_n \right\rangle$, which are Lie subalgebras of $L.$ Observe that $\dim(M'+I/I)=3$ as $L'/I=M'+I/I$. We claim that 
$I \subseteq M'.$ $C_L(u_i)$ is maximal in $L$ as $[u_i,L]= I$ for all $1 \leq i \leq n$. So $L' \subseteq C_L(u_i)$ as $L'=F(L)$ is the intersection of all maximal subalgebras of $L$.
 
  Since $L^2=[L',L]=I \subseteq Z(L)$, for any generator $v \in I$ can be written as 
$$v=[\alpha_1w_1+\alpha_2w_2+\alpha_3w_3, \beta_1 x+\beta_2 y+\beta_3 z] \in L^2,$$ where $L'+I/I=\gen{w_1+I, w_2+I, w_3+I}$. Thus, $v \in [M',M] = M^2 \subseteq M'$, i.e., $I \subseteq M'.$ Hence, our claim is proved. Therefore $M'/I=L'/I$ i.e. $M'=L'.$ Thus, $M$ is $7$-dimensional, $3$-step nilpotent as $0\neq v \in M^2 \implies M^2 \neq 0.$ Since $N=\langle u_1, \cdots, u_n\rangle$ and $[u_i,L]= I$ for all $1 \leq i \leq n$, implies that $N'\subseteq  I$, i.e.,  $N$ is at most $2$-step nilpotent. Now, $L' =M' \subseteq M$ and $N$ acts on $M$ by ad-map. $M$ is ideal of $L$, $L=N+M$ and $[N,M]\subseteq I \subset L.$ If $N$ is non-abelian, then $N$ is isoclinic to the Heisenberg Lie algebra by Theorem \ref{heisenberg_thm}.

{\textbf{\it (b)}} If $I\subset L^2$ then $L/I$ is $3$-step nilpotent and $[L/I: Z(L/I)] = 3.$ So $\frac{L/I}{Z(L/I)}$ is a $2$-generator Lie algebra by Lemma \ref{ext_lemma}. We can conclude that L can be generated by $\{x, y, u_1,\cdots, u_n\}$ so that $ [u_i, L] =I.$ Now, using the same arguments as in the preceding case, required result follows by taking $M := \langle x,y \rangle$ and $ N:=\langle u_1,...,u_n \rangle $, where $ \dim M =6.$

{\textbf{\it (c)}} If $I \nsubseteq L^2$ then $I \cap L^2 = 0 $ as $\dim I=1.$ Thus, $L/I$ is also $3$-step nilpotent. By using the same arguments as in the above case, $L/I$ is a $2$-generator Lie algebra and $L=\gen{x, y, u_1, ..., u_n}$ such that $ [u_i, L] =I.$ Let $M_1 := \langle x,y \rangle.$ Observe that $L'/I=M_1'+I/I$  has dimension 3,  $\dim (M_1 +I/I) =5$ and $M_1$ and $L$ have same nilpotency class. Since $M_1$ is a 2-generator, $M_1'/ M_1^2$ is 1-dimensional. Thus, $M_1$ cannot contain $I$, which implies $\dim(M_1) =5.$ If $M_1 \subseteq C_L(u_i)$, for all $1 \leq i \leq n$, i.e., $[x,u_i] = 0 = [y,u_i],\, \forall \, 1 \leq i \leq n$ then $N_1:=\langle u_1,...,u_n \rangle $ with $N_1' =I$ is isoclinic to the Heisenberg Lie algebra over $k$, and hence L is the central product of $M_1$ and $N_1.$ Therefore, $M=M_1$ and $N=N_1$ are the required Lie subalgebras. If $M_1 \nsubseteq C_L(u_i)$ for some $i$, i.e., assume $[u_i, M_1]\neq 0$ for some $i.$ Thus, $0 \neq [u_i, M_1]\subset [u_i, L]=I$ implies that $[u_i, M_1] = I$, and the Lie subalgebra $M :=\langle x,y,u_i \rangle $ of L is 7-dimensional.
We can easily see that $M $ and $N := \langle u_1, \cdots ,u_i-1,u_i+1,\cdots,u_n \rangle $ are the required Lie subalgebras of $L.$

Now, assume that $L$ is $4$-step nilpotent. Then either $I = L^3$ or $I \neq L^3.$ We claim that there is no $4$-step nilpotent Lie algebra $L$ such that $I \neq L^3.$ If such an $L$ exists, then $L/I$ is $4$-step nilpotent, which is not possible, as $(L/I)/ Z(L/I)$, being of dimension $3$, can be at most $3$-step nilpotent. Therefore, we can assume that $I = L^3$ which implies that $L/I$ is $3$-step nilpotent. Therefore, L can be generated by $\{x, y, u_1, ..., u_n\}$ such that $[u_i, L] =I$ by \ref{ext_lemma}. The required result follows by assuming $M :=\langle x,y \rangle $ and $N:=\langle u_1,...,u_n \rangle$, where $ \dim M= 6.$ This completes the proof of the theorem.
\section{Proof of Theorem \ref{dimL'3}}\label{sec3}

\begin{proof} If $\dim(L')$ is $1$ or $2$, then result holds trivially by Theorem \ref{breadth1_classify} and $b(L)\leq  \dim L'$. Now, assume that $\dim L'=3$.
	Since $Z(L) \subseteq L'$, thus $b(L)=2$ or $3.$ Thus, the following two cases arises:
	
	{\it{ Case(1)}} If $b(L)=3$ then there exists an element $x \in L$ such that $\rank(\ad x)=3$, i.e., $L'=[x,L]$. Thus $L'=w(L).$
	
	{\it{ Case(2)}} If $b(L)=2$ then $\dim (L/Z(L))=3$ by Theorem \ref{breadth2_classify}. Also $[L':Z(L)] \leq [L:Z(L)]=3$. But $[L': Z(L)] \neq 2$ or $3$ otherwise $L'=Z(L)$ or $L'=L$ respectively, which is not possible. So we have the following subcases. \medskip
	
	{\it{subcase(2a)}} If $L'=Z(L)$ then $[L:L']=3$. Let $L=\gen {x,y,z}$ and $L'=\gen {[x,y],[x,z],[y,z]}$ as $L^2=0$, i.e., $L$ is $2$-step nilpotent. 
	$$\begin{aligned}
	&\text{If } \alpha \neq 0 \text{ then } \alpha[x,y]+ \beta [x,z]+\gamma [y,z]= [x-\frac{\gamma}{\alpha}z, \alpha y+ \beta z].\\
	&\text{If } \alpha= 0 \text{ then }
	\beta [x,z]+\gamma [y,z]= [z, -\beta x-\gamma y].
	\end{aligned}$$
	Thus, $w(L)=L'.$
	
	{ \it{subcase(2b)}} If $[L': Z(L)]=1$ then $L$ is at most $3$-step nilpotent by Lemma \ref{maximal_nilpotency_class}. But $L$ cannot be $1 $ or $2$- step nilpotent as $L'=0$, or $L'=Z(L)$ respectively, which contradicts the hypotheses. So $L$ is $3$-step nilpotent. But $[L:L']=2$ implies $L=\gen {x,y}.$ Thus, $L'=\gen {[x,y],L^2}$, so $\dim (L'/L^2)=1$ and $\dim L^2=2$, i.e., $L'=\gen {[x,y],[x,[x,y]],[y,[x,y]]}.$ In the above calculation, replacing $z$ with $[x,y]$ gives $w(L)=L'.$
	\end{proof}

\section{$2$-step nilpotent Lie algebras}\label{sec4}
Observe that if $L$ is a $2$-step nilpotent Lie algebra with $\dim L' =4$ then $Z(L)=L'$ and $L$ is minimally generated by $4$ elements i.e., $\dim L \geq 8.$ The following lemma investigates our question about an $8$-dimensional nilpotent Lie algebra. 

\begin{lem} \label{2-step_dim8} 
	Let $L$ be an $8$-dimensional, $2$-step nilpotent Lie algebra over $k=\mathbb{F}_q$, field of odd characteristic such that $\dim L'=4.$
	
	{\it{(1)}} If breadth type of $L$ is $(0, 1,3)$ or $(0,1, 2, 3)$, then $w(L) \neq L'.$
	
	{\it{(2)}} Let $L$ is of breadth type $(0, 2, 3).$ Then $w(L) = L'$ if and only if $L$ admits a generating set $\{u_1, u_2, u_3, u_4\}$ such that $[u_1, u_2] = 0 = [u_3, u_4].$
	
	{\it{(3)}} If breadth type of $L$ is $(0, 3)$ then $w(L) = L'.$ 
	
	 Further, if $w(L) \neq L'$, then each element of $L'$ is sum of at most two elements of $w(L).$
	\end{lem}
\begin{proof}
	{\it{(1)}} Since $L$ has breadth $1$-element, say $w$, we can always extend $\{w\}$ to a generating set $\{x, y, z, w\}$ for $L$ such that $L'=\langle [x,y], [x,z], [y,z], [z,w]\rangle.$ We claim that $[x,y]+[z,w] \notin w(L).$ Suppose, if possible, that $[x,y]+[z,w] \in w(L)$, i.e., 
	$$[x,y]+[z,w] = [{\alpha_1}x +{\alpha_2}y+ {\alpha_3}z+{\alpha_4 }w , {\beta_1}x+{\beta_2}y+{\beta_3}z+{\beta_4}w],$$
	where $\alpha_i, \beta_j \in k$ for $1 \le i, j \le 4.$ On comparing both sides, we get,
	\begin{eqnarray}
		\beta_2 \alpha_1 - \alpha_2 \beta_1 & = & 1, \label{eqn1} \\     
		\beta_3 \alpha_1 - \alpha_3 \beta_1 &=& 0, \label{eqn3}\\
		\beta_3 \alpha_2 - \alpha_3 \beta_2 &=& 0, \label{eqn2}\\
		\beta_4 \alpha_3 - \alpha_4 \beta_3 &=&1.\label{eqn4}
	\end{eqnarray}
	
	If $\beta_3 \neq 0$ then $\alpha_2 = {\alpha_3 \beta_2}{\beta_3}^{-1} $ and $\alpha_1 = {\alpha_3\beta_1}{\beta_3}^{-1}$ by Equations \eqref{eqn2} and \eqref{eqn3} respectively, which contradicts Equation \eqref{eqn1}. So, $\beta_3=0.$ From \eqref{eqn4}, we get $\alpha_3 \neq 0.$ Then from \eqref{eqn2} and \eqref{eqn3}, we get $\beta_1=\beta_2=0$, which don't satisfy \eqref{eqn1}. Therefore, there is no solution of the above system. Hence, $[x,y]+[z,w] \notin w(L).$ 
\medskip

	{\it{(2)}} Let $L$ has a generating set $\gen{u_1,u_2,u_3,u_4}$ such that $[u_1,u_2]=0$ and $[u_3,u_4]=0$. So, $L'=\gen{[u_1,u_3],[u_2,u_3],[u_1,u_4],[u_2,u_4]}$. Consider
	$$\alpha[u_1,u_3]+\beta[u_2,u_3]+\gamma[u_1,u_4]+\delta[u_2,u_4]=[\gamma u_1+\delta u_2+u_3,-\alpha u_1-\beta u_2+u_4]$$ where $\alpha,\beta,\gamma,\delta \in k$. Hence $w(L)=L'$. To prove the converse side, suppose if possible that $L$ does not posses any generating set $\gen{u_1,u_2,u_3,u_4}$ such that $[u_1,u_2]=0=[u_3,u_4]$. But breadth type of $L$ is $(0,2,3)$, i.e., there exists an $x \in L$ such that $b(x)=2$ and there exist other generators $y,z,w \in L$ such that $[x,y] \neq 0, \, [x,w] \neq 0$ and $[x,z]=0$. By our assumption hypothesis, $[y,w]\neq 0$. Since $L$ has an element of breadth $3$, so we can assume that $b(w)=3$, i.e., $[z,w] \neq 0$. Also $[y,z] \neq 0$ otherwise $b(z)=1$ which is not possible for any element of $L$. Therefore, we can always choose a generating set $\gen{x,y,z,w}$ of $L$ such that $[x,z]=0$, i.e., only one commutator of generating elements is trivial. Observe that if $[z,w]$ cannot be written as a linear combination of remaining basic commutators then $[x,y]+[z,w] \notin w(L)$  by doing similar calculations as in the above case(1). So, we can assume that
	$$[z, w]= {\lambda_1}[x, y]+ {\lambda_2}[y, z]+ {\lambda_3}[y, w]+ {\lambda_4}[x,w], \text{ where } \lambda_1,\lambda_2,\lambda_3,\lambda_4 \in k$$
	\begin{equation}\label{cls2eqn1}
		\implies [y,{-\lambda_1}x+{\lambda_2}z+{\lambda_3}w]+[w, {-\lambda_4}x+z]= 0.
	\end{equation}
	If $\lambda_3 \neq 0$ then above equation can be written as $[y, {-\lambda_1}x + {\lambda_2}z + {\lambda_3}w] + {\lambda_3}^{-1} [{-\lambda_1}x + {\lambda_2}z + \lambda_3w, {-\lambda_4}x+z]= 0.$ Taking $w'=	{-\lambda_1}x + {\lambda_2}z + {\lambda_3}w $, we get $[w', -y- {\lambda_4 \lambda_3^{-1}} x + {\lambda_3}^{-1}z]=0.$ Take $y'=-y-{\lambda_4\lambda_3^{-1}}x+{\lambda_3}^{-1}z$, we get a generating set $\{ x,y',z,w'\}$ for $L$ such that $L'=\langle [x, y'],[y', z],[y', w'],[z,w'] \rangle$ and $[x, z]=0=[y',w']$, which contradicts our assumption hypothesis. So, $\lambda_3=0.$ Therefore, \eqref{cls2eqn1} reduces to $[y, {-\lambda_1}x+{\lambda_2}z ]+[w, z-{\lambda_4}x]= 0.$ Now, replace $z$ by $z' := z-{\lambda_4}x .$ An easy calculation gives 
	$$[y,{-\lambda_1}x+{\lambda_2}z]=[y,{-\lambda_1}x+{\lambda_2}(z'+{\lambda_4}x)]= [y,(\lambda_4\lambda_2 - \lambda_1)x+{\lambda_2}z'].$$ 
	Equation \eqref{cls2eqn1} gives,
	\begin{equation}\label{cls2eqn1a}
		[y,(\lambda_4 \lambda_2 -\lambda_1) x + {\lambda_2}z'] + [w,z']=0.
	\end{equation}
	
	We claim that $\lambda_4 \lambda_2 - \lambda_1 \neq 0.$ Suppose, $\lambda_4 \lambda_2 - \lambda_1=0.$ Then \eqref{cls2eqn1a} reduces to $[y,{\lambda_2} z'] + [w, z']=0,$ i.e., $[{\lambda_2}y+w, z']=0.$ Taking $w' := {\lambda_2}y+w$, we get a generating set $\{x, y, z', w' \}$ of $L$ such that $[x,z']=0=[w',z']$, which implies $b(z')=1$, contradiction to the given hypothesis.
	
	So, we now assume $\lambda_4 \lambda_2- \lambda_1 \neq 0.$ Taking $x' := (\lambda_4 \lambda_2 - \lambda_1)x+{\lambda_2}z'$, we get a new generating set $\{ x', y, z', w \}$ such that $L'= \langle[x', y],[y, z'],[y, w],[x',w] \rangle$, $[x',z']=0$ and, by \eqref{cls2eqn1a}, $-[x', y]=[z', w].$  Now, we claim that ${\mu_1}[y,z']+{\mu_2}[x', w] \notin w(L)$ for some $\mu_1,\mu_2 \in k^*.$ Suppose if possible that ${\mu_1 }[y, z']+{\mu_2}[x',w] \in w(L)$ for all $\mu_1, \mu_2 \in k^*.$ Thus,
	$${\mu_1}[y,z']+{\mu_2}[x',w]=[{\alpha_1}x'+{\alpha_2}y+{\alpha_3}z'+{\alpha_4 }w, {\beta_1}x' + {\beta_2}y + {\beta_3}z' + {\beta_4}w],$$
	where $\alpha_i,\beta_j \in k$ for $1 \le i,j \le 4.$ Expanding the Lie bracket on the right hand side and equating terms on both sides, we get
	\begin{eqnarray}
		\alpha_1 \beta_4 - \alpha_4 \beta_1 &=& \mu_2,\label{eqn16}\\
		\alpha_2 \beta_4  - \alpha_4 \beta_2 &=& 0, \label{eqn15}\\
		\alpha_2 \beta_3  - \alpha_3 \beta_2   &=& \mu_1, \label{eqn14}\\
		\alpha_1 \beta_2  - \alpha_2 \beta_1 - \alpha_3 \beta_4  + \alpha_4
		\beta_3 & = & 0. \label{eqn13} 
	\end{eqnarray}
	
	If $\alpha_2 = 0$ then $\alpha_3
	\beta_2=-\mu_1$, $\alpha_4 = 0$ and $\alpha_1 \beta_4 = \mu_2$ by Equations \eqref{eqn14},  \eqref{eqn15} and \eqref{eqn16} respectively. Since $\alpha_1 \neq 0 \neq \alpha_3$, therefore $\beta_2 = -\mu_1 \alpha_3^{-1}$ and $\beta_4 = \mu_2 \alpha_1^{-1}$ gives $\mu_1 \alpha_1 \alpha_3^{-1}+\mu_2 \alpha_3 \alpha_1^{-1}=0$ by Equation \eqref{eqn13}. Thus, $(\alpha_1 \alpha_3^{-1})^2 = - \mu_1^{-1} \mu_2$, which is not true as we can always choose $\mu_1,\mu_2 \in k^*$ such that $ -\mu_1^{-1}
	\mu_2$ is non square. So, we can assume that $\alpha_2 \neq 0.$ If $\alpha_4 = 0$, then \eqref{eqn15} implies that $\beta_4 =0$, which contradicts \eqref{eqn16}. So, finally assume that both $\alpha_2$ and $\alpha_4$ are non zero. By solving above equations, we get, $\beta_4=\alpha_2^{-1} \alpha_4 \beta_2$, $\beta_1=\alpha_2^{-1}\alpha_1\beta_2-\alpha_4^{-1}\mu_2$, $\beta_3=\alpha_2^{-1}\alpha_3 \beta_2-\alpha_4^{-2}\alpha_2\mu_2$. Putting these values of $\beta_i$'s in \eqref{eqn14} gives $-\mu_1\mu_2^{-1}=(\alpha_2\alpha_4^{-1})^{2}$, which is not true, as we can always choose $\mu_1, \mu_2 \in k^*$ such that $-\mu_1 \mu_2^{-1}$ is non square.\medskip 
	
	{\it{(3)}} Since $L$ has breadth type $(0,3)$, its presentation given in \cite[Theorem 6.4]{riju2021}, is 
	\[ L = \langle x, y, z, w \mid  [z, w]= -[x, y], [x, z] =-r [y,  w]\rangle \] where $r$ is any non-square in $k^*$. For any given $\lambda_i \in k, \, 1 \leq i \leq 4$, we have to find existence of $\alpha_i, \beta_i \in k$ such that 	$${\lambda_1}[x, y] +{\lambda_2}[y, z]+ {\lambda_3}[y, w]+ {\lambda_4}[x, w] = [{\alpha_1}x+{\alpha_2}y+{\alpha_3}z , \  {\beta_1}x+ {\beta_2}y+{\beta_3}z+{\beta_4}w].$$ Opening the Lie bracket on the right hand site and comparing the terms, we get 
	\begin{eqnarray}
		\alpha_1 \beta_2 - \alpha_2 \beta_1 - \alpha_3 \beta_4   &=& \lambda_1,
		\label{eqn17}\\
		\alpha_2 \beta_3  - \alpha_3 \beta_2 &=& \lambda_2 , \label{eqn18}\\
		\alpha_2 \beta_4 -r (\alpha_1 \beta_3  - \alpha_3 \beta_1)   &=& \lambda_3,
		\label{eqn19}\\
		\alpha_1 \beta_4   & = &\lambda_4. \label{eqn20}
	\end{eqnarray}
	Taking $\beta_i$'s as variable, we will show that above system of equations has a solution. By an easy calculation, we get, $\beta_4=\alpha_1^{-1}\lambda_4, \, \beta_3=\alpha_2^{-1}(\alpha_3\beta_2+\lambda_2), \, \beta_1=(r\alpha_3)^{-1}\lambda_3 -(r\alpha_1\alpha_3)^{-1}\alpha_2\xi+(\alpha_2\alpha_3)^{-1}\alpha_1\lambda_2+\alpha_2^{-1}\alpha_1\beta_2$, putting these values in \eqref{eqn17}, we get
	$$r \lambda_2 \alpha_1^2 + (\lambda_3 \alpha_2 + r \lambda_1 \alpha_3) \alpha_1 - \lambda_4(\alpha_2^2 - r \alpha_3^2) = 0$$ which is quadratic equation in $\alpha_1$. If discriminant of this quadratic equation is either zero or quadratic residue then above system of equations has a solution in the field $k$. But discriminant is \begin{equation}\label{eqn21}
		(\lambda_3^2 + 4r\lambda_2 \lambda_4) \alpha_2^2 + 2r\lambda_1 \lambda_3 \alpha_2 \alpha_3 +
		r^2(\lambda_1^2 - 4 \lambda_2 \lambda_4 ) \alpha_3^2.
	\end{equation}  is of the form $f_1 \alpha_2^2 + f_2 \alpha_2 \alpha_3 + f_3 \alpha_3^2$, where $f_1,f_2,f_3 \in k.$ But we can find $\alpha_2, \alpha_3 \in k$ such that \eqref{eqn21} is either zero or a quadratic residue. Hence proved. \medskip

Further, let $I$ be any $1$-dimensional Lie subalgebra of $L'$. By using Theorem \ref{dimL'3}, $(L/I)'=w(L/I)$. By Lemma \ref{prelem5}, the required result follows.	
\end{proof}

\begin{lem} \label{cl2lem2} 
Let $L$ be a  finite-dimensional $2$-step nilpotent Lie algebra of dimension at least  $9$ over $k=F_q$, field of odd characteristic, such that $\dim L'=4$ and $Z(L) =L'.$ Then $w(L)=L'.$ 
\end{lem}
\begin{proof}
	Since $b(L) \leq \dim L'$, i.e., $b(L) \leq 4$. If $b(L)=4$ then there is nothing to prove. So assume $b(L)=3$. By Theorem \ref{breadth3_classify}, there exists one dimensional ideal $I$ of $L$ such that $[L/I:Z(L/I)]=3$. Using similar arguments of Lemma \ref{ext_lemma}, we can assume $$L=\gen{x,y,z,u_1,u_2,\ldots, u_n}$$ where $[u_i,L]=I$ for $1 \leq i \leq n$ and $n \geq 2$ as $[L:L'] \geq 5$. Set $M=\gen{x,y,z}$. Observe that $M$ is $6$-dimensional Lie subalgebra of $L$ with $\dim L'=3$. Therefore, $w(M)=M'$ by using Theorem \ref{dimL'3}.
	
	If $[u_i,M]=0$ for all $1 \leq i \leq n$ then $L$ can be written as a central direct product of $M$ and $n$-generator Lie algebra isoclinic to an Heisenberg Lie algebra generated by $\{u_1, \ldots, u_n\}$ as $Z(L)=L'$. Hence $w(L)=L'$ by using Lemma \ref{prelem4}.
	
	If $[u_i,M]=I$ for some $i \in \{1,\ldots,n\}$. Assume $u_i=u_1$ by re-indexing the set $\{u_j: 1\leq j \leq n\}$. For notational convenience, set $w:=u_1$. Since $C_L(w)$ is a maximal subalgebra of $L$, we can suitably modify te generators $x,y,z$ such that $L'=\gen{[x,y],[x,z],[y,z],[z,w]}$ with $[x,w]=0=[y,w]$ and $I=\gen{[z,w]}$. By suitable modification of $u_i's$, we can assume that $[z,u_i]=0$ for all $2 \leq i \leq n$. For $\alpha,\beta, \gamma,\delta \in k$,
	\[
	\begin{aligned}
		&\text{ If } \delta \neq 0, \text{ then } \alpha[x,y]+\beta[z,w]+\gamma[y,z]+\delta[x,z]=[\alpha\delta^{-1}y+z,-\gamma y-\delta x+\beta w] \\
		&\text{ If } \delta=0, \gamma \neq 0 \text{ then }\alpha[x,y]+\beta[z,w]+\gamma[y,z]=[-\beta\gamma^{-1}w+y,\gamma z-\alpha x] 
	\end{aligned}\]
Now assume that $\delta=0=\gamma$. Set $N=\gen{u_2,\ldots,u_n}$. If $x \notin C_L(N)$, i.e.,  $[x,u_i]=t[z,w]$ for some $2 \leq i \leq n$ and $t \in k^*$, then $\alpha[x,y]+\beta[z,w]= [x,\alpha y+\beta t^{-1}u_i]$. If $x \in C_L(N)$ but $y \notin C_L(N)$ then, $[y,u_i]=t[z,w]$ for some $2 \leq i \leq n$ and $t \in k^*$, so $\alpha[x,y]+\beta[z,w]= [y,-\alpha x+\beta t^{-1}u_i]$. If $x,y \in C_L(N)$ but $w \notin C_L(N)$ then, $[w,u_i]=t[z,w]$ for some $2 \leq i \leq n$ and $t \in k^*$, so $\alpha[x,y]+\beta[z,w]= [x+w,\alpha y+\beta t^{-1}u_i]$. If $x,y,z,w \in C_L(N)$, then in this case $n \geq 3$ and $N'=I$ as $[N,L]=I$. Thus, $[u_i,u_j]=t[z,w]$ for some $2 \leq i,j \leq n$ and $t \in k^*$, so, $\alpha[x,y]+\beta[z,w]= [y+u_i,\alpha y+\beta t^{-1}u_j]$. Hence $w(L)=L'$.	 
\end{proof}

\section{$3$-step nilpotent Lie algebras}\label{sec5}

Observe that if $L$ is $3$-step nilpotent Lie algebra with $\dim L'=4$, then $L$ is minimally generated by atleast $3$ elements. Therefore, $\dim L \geq 7.$ We will prove that for $\dim L=7$, $w(L)=L'$ if $\dim Z(L) \leq 2$ and $w(L) \neq L'$ otherwise. From now onwards, $\delta \in \{0, 1\}.$

\begin{lem} \label{p7lem1}
	Let $L$ be a $7$-dimensional $3$-step nilpotent Lie algebra over field $k= \mathbb{F}_q$ of odd characteristic with $b(L)=3$, $\dim Z(L) \leq 2$  and $\dim L'=4.$ Then $w(L)=L'.$
\end{lem}
\begin{proof}By Lemma \ref{breadth3_classify} and \ref{ext_lemma}, $L$ is minimally generated by $3$ elements $x, y, z$ (say). Set $C = C_L(L').$ We will divide the proof into $4$ steps.
	
	\, \,
	   
\noindent{\bf Step 1.} {\it If $\dim Z(L) = 1$ then $C=L'.$}
	\begin{proof}
		As $0 \neq L^2 \subseteq Z(L)$ and $\dim Z(L) = 1$ so $Z(L)=L^2.$ Also $[L': L^2]=3$ therefore no non-zero element from the Lie algebra $\left\langle [x, y],  [x, z],  [y,z] \right\rangle$ can lie in $L^2.$ Since $L' \subseteq C$ implies that $\dim C \geq 4.$ If $\dim C=6$, then, without loss of generality, we can assume that $y,z \in C.$ Observe that $C_L(x)\cap L' \subseteq Z(L)$ and  $\dim (C_L(x)\cap L') =3$. So $\dim Z(L) \geq 3$.
		If $\dim C=5$, then, without loss of generality, we can assume that $z \in C.$ Observe that $C_L(x)\cap C_L(y)\cap L' \subseteq Z(L)$ and  $\dim (C_L(x)\cap C_L(y)\cap L') =2$. So $\dim Z(L) \geq 2$. Thus, $\dim C \neq 5$ or $6$. Hence, $C =L'.$
	\end{proof}
	\noindent{\bf Step 2.} {\it If $\dim Z(L) = 1$, then $w(L)=L'.$}
	\begin{proof}
		By Step 1 we have $C =L'.$ Observe that $\bar L := L/ L^2$ is $2$-step nilpotent Lie algebra of dimension $6$ on $3$- generators. Using the calculation of Theorem [\ref{dimL'3}, subcase(2a)] for $\bar L$ instead of $L$, we get, for $i \in k$, $$\bar L' =  \bigcup \limits_{ \delta , i} \ [ {\delta}\bar x+i \bar z, \bar L].$$
		We can interchange $x,y,z$ in the above equation because of symmetry. It is sufficient to show that $L^2 \subseteq \bigcap \limits_{ \delta , i} \ [ {\delta}u+i v, L]$ for some $u \neq v$ in $\{x, y, z\}$, where $i \in k$ such that $\delta$ and $i$ are not simultaneously zero. Firstly, suppose $[x,[x,y]] \neq 0$, i.e., $L^2=\gen{[x,[x,y]]}.$ 
		If $[z,[x,y]] \neq 0$, then $[z,[x,y]] = \mu[x,[x,y]]$ for some $\mu \in k^*.$ Taking $z'= z-\mu x$, gives a new generating set $\{x,y,z' \}$ for $L$ such that $[z',[x,y]]=0$ and $[x,[x,y]] \neq 0.$ So we can always assume that $[z,[x,y]]=0.$ Since $z \notin C$, either $[z,[y,z]]$ or $[z,[x,z]]$ is non-trivial. Therefore, for $\delta=0,1$ and $i \in k$, not simultaneously zero, we can easily see that $L^2 \subseteq \bigcap\limits_{ \delta , i} \ [{\delta}x+i z, L].$  
		
		Now, let us assume that $[x,[x,y]]=0.$ Then at least one of $[x,[y,z]]$ and $[x,[x,z]]$ is not trivial. If $[y,[x,y]]=0$, then $[z,[x,y]] \neq 0.$ Therefore, for $\delta=0,1$ and $i \in k$ not simultaneously zero, we get $L^2 \subseteq    \bigcap \limits_{ \delta , i} \ [{\delta}z+i x, L].$ If $[y,[x,y]] \neq  0$, then for $\delta=0,1$ and $i \in k$, not simultaneously zero, we can easily see that $L^2 \subseteq   \bigcap \limits_{ \delta , i} \ [{\delta}y+i x, L]$. 
	\end{proof}
	\noindent{\bf Step 3.} {\it If $\dim Z(L) = 2$ and $\dim L^2 = 1$, then $\dim C=5.$}
	\begin{proof}
		If $\dim C =6$, then as observed above $\dim Z(L) \geq 3$. If $\dim C = 4$, then $C = L'.$ We can assume that $[y,z] \in Z(L).$ If not, then $[y, z] = r [x, y]+s [x, z]$ modulo $Z(L)$ for some $r, s \in k.$ This implies $[y-s x, z+r x]=0$ under modulo $Z(L).$ Thus, $y'= y-s x$ and $z'= z+r x$ gives the new generating set $\{x, y', z'\}$ with the required property. Atleast one of $[x,[x, y]]$ and $[x, [x,z]]$ is non trivial; otherwise $x \in C$. We can assume that $[x,[x, y]] \neq 0$, i.e., $L^2=\gen{[x,[x,y]]}$. Also $[z,[x,y]]=[y,[x,z]]$ by Jacobi identity. If $[z,[x,y]]=0$ then $[z,[x,z]] \neq 0$ and $ [y,[x,y]] \neq 0$; otherwise $z,y \in C$. We can modifying the generating set such that $L'=\gen{[x,y],[x,z],[y,z],[x,[x,y]]}$ and $[y,z] \in Z(L), \, [z,[x,y]]=[y,[x,z]]=0=[x,[x,z]], \, [z,[x,z]] \neq 0, \, [y,[x,y]] \neq 0$. Indeed if $[x,[x,z]] \neq 0 $ then $[x,[x,z]]= \mu[z,[x,z]]$ for some $\mu \in k^*$, i.e., $[x-\mu z, [x,z]]=0$. Taking $x'=x-\mu z$ gives $\gen{x',y,z}$ as required generating set. Now $ [y,[x,y]] \neq 0$ means $ [y,[x,y]]= \lambda [x,[x,y]]$ for some $\lambda \in k^*$, i.e., $y-\lambda x \in C$, which is not possible.
		
		 So now assume that $[z,[x,y]] \neq 0$. We claim that then $[z,[x,z]]=0=[y,[x,y]]$. Suppose not, then $[z,[x,z]]=\mu_1 [y,[x,z]]$ for some $\mu_1 \in k^*$, i.e., $[z-\mu_1 y,[x,z]]=0$. Taking $z-\mu_1 y$ in place of $z$ gives generating set with $[z,[x,y]]=0$, which is not possible. If $[y,[x,y]] \neq 0$ then $[y-\mu_2 z, [x,y]]=0$ for some $\mu_2 \in k^*$. Again taking $y-\mu_2 z$ in place of $y$ gives generating set with $[z,[x,y]]=0$. So our claim is proved. We can modify the generating set such that $L'=\gen{[x,y],[x,z],[y,z],[x,[x,y]]}$ and $[y,z] \in Z(L), \, [z,[x,y]] \neq 0$, i.e., $[y,[x,z]] \neq 0$, $[x,[x,z]]=0$, $[y,[x,y]]=0$ and $[z,[x,z]]=0.$ Thus $[z,[x,y]]=\alpha [x,[x,y]]$ for $\alpha \in k^*$, i.e., $z-\alpha x \in C$, which is contradiction. So $\dim C \neq 4$. \end{proof}
	\noindent{\bf Step 4.} {\it If $\dim Z(L) = 2$ and $\dim L^2 = 1$, then $w(L) = L'.$}
	\begin{proof}
		By Step 3, we know that  $\dim C = 5.$ Assume that $z \in C.$ We discuss each case $[y,z] \in Z(L)$, $[x,z] \in Z(L)$ and $[y,z], [x,z] \notin Z(L)$ separately. Since $\bar L :=  L/L^2 $ is the  $2$-step nilpotent Lie algebra, then, as explained in  Step 2, to prove that $w(L) = L'$, it is sufficient to show that
		$$L^2  \subseteq \bigcap \limits_{ \delta , i} \ [ {\delta}u+i v,  L]$$
		for some $u \neq v$ in $\{x, y, z\}$, where $\delta=0,1$ and $i \in k$ such that $\delta$ and $i$ are not simultaneously zero.
		
		{\it Case (i).} Let $[y,z] \in Z(L)$ then $[x, [y,z]]=[y, [x, z]]=[z,[x, y]]=0.$ But $[x,[x, z]]\neq 0$ and $[y, [x, y]]\neq 0$. Thus for $\delta=0,1$ and $i \in k$, not simultaneously zero, we can see that
		$ L^2  = \bigcap \limits_{ \delta, i} \ [{\delta}x+i y, L'] \subseteq \bigcap \limits_{ \delta, i} \ [{\delta}x+i y, L].$ Hence $w(L)=L'$ by Lemma \ref{prelem3}.
		
		{\it Case (ii).} Next, assume that $[x,z] \in Z(L).$ Then $[x, [x, y]]$ and $[y, [y, z]]$ are non-trivial elements and $[x, [y, z]]=0.$ Thus for $\delta=0,1$ and $i \in k$, not simultaneously zero, we can easily see that
		$ L^2  = \bigcap \limits_{ \delta, i} \ [{\delta}y+ix, L'] \subseteq \bigcap \limits_{ \delta, i} \ [{\delta}y+ix, L].$ Hence $w(L)=L'$ by Lemma \ref{prelem3}.

		{\it Case (iii).} Let $[y, z],[x, z] \notin Z(L).$ Then $[y, z]$ and $[x, z]$ cannot be a multiple of each other. Indeed if $[y, z] = \mu[x, z]$ for some $\mu \in k^*$, then taking $\{x, y-\mu x, z\}$ as a generating set with $[y-\mu x, z] \in Z(L)$ arrives in Case (i). We can modify the generating set for $L$ to $\{x', y', z\}$ such that $[x', y'] \in Z(L)$ and $z \in C.$ If not, then modulo $Z(L)$, $[x, y] = \lambda_1 [x,z] + \lambda_2 [y, z]$ for some $\lambda_1, \lambda_2 \in k.$ So, $[ x+\lambda_2 z, y-\lambda_1 z]=0$ by taking modulo $Z(L)$. Taking $x'= x+\lambda_2 z, \, y' =y-\lambda_1 z$ gives us required generating set.
		
		Firstly, assume $[x',[x',z]]=0$ and $[y',[y',z]]=0.$ Then 
		$[x', [y',z]] = [y', [x',z]] \neq 0$ as otherwise $x',y' \in C$. Thus, for $\delta = 0, 1$ and $i \in k$, not simultaneously zero, we can easily check that
		$$ L^2  = \bigcap \limits_{ \delta, i} \ [{\delta}x'+i y', L'] \subseteq  \bigcap \limits_{ \delta, i} \ [{\delta}x'+iy', L].$$		
		
		Now, assume that $[y',[y',z]]$ or $[x',[x',z]]$ is non-trivial.We will only discuss $[y',[y', z]] \neq 0$ as other one follows similarly. Without loss of generality, we can assume $[x',[y',z]]=[y',[x',z]]= 0.$ Indeed $[x',[y',z]] = s [y',[y',z]]$, i.e., $[x'-s{y'},[y',z]] =0.$ Taking $\tilde{x}= x'-s{y'}$ gives generating set $\{\tilde{x}, y', z\}$ for which $[y', [\tilde{x},z]]=[\tilde{x},[y',z]]=0$, $[\tilde{x},y'] \in Z(L)$, $z \in C$ and $[y',[y',z]] \neq 0.$ For any $ t, i \in k$ we have $t[\tilde{x}, [ \tilde{x} ,z]] =[\tilde{x}+ib' , t[\tilde{x},z]]$ and $t[y',[y', z]]=[y' , t[y',z ]].$ Thus, for $\delta = 0, 1$ and $i \in k$, not simultaneously zero, we get
		$ L^2  = \bigcap \limits_{ \delta, i} \ [{\delta}\tilde x+i y', L'] \subseteq  \bigcap \limits_{ \delta, i} \ [{\delta}\tilde x+i y', L].$  Hence $w(L)=L'$ by Lemma \ref{prelem3}.
	\end{proof} Now, we will discuss the case where $\dim Z(L) = \dim L^2  = 2.$ In this case, we claim that $b(L) = 4$. Suppose not, i.e., $b(L) = 3.$ Then by Theorem \ref{breadth3_classify} there exists an ideal $I$ of $L$ such that $[L/I : Z(L/I)] = 3.$ Since $Z(L) = L^2 $ has dimension $2$, i.e., $L/I$ is $3$-step nilpotent. So, assume that $z+I \in Z(L/I).$ Observe that $[(L/I)':(L/I)^2]=2$, which cannot happen as $L/I$ possesses only two non-central generators. Hence, the claim as well as the lemma is proved.
	\end{proof}
The following lemma describes the case where $w(L) \neq L'$ for a $3$-step nilpotent Lie algebra of dimension $7.$
\begin{lem}\label{p7lem2} Here, $k=\mathbb{R}$ or $\mathbb F_{q}$, $\mathbb F_q$ is the field of odd characteristic. Let $L$ be a $7$-dimensional, $3$-step nilpotent Lie algebra of $b(L)=3$, $\dim Z(L) = 3$ and $\dim L'=4.$
	Then $w(L) \neq L'.$ Further, each element of $L'$ is sum of at most two elements of $w(L).$
\end{lem}
\begin{proof}
	Since $ \dim L/L'=3$, so let $L = \gen{x, y, z}.$ Then $L'= \langle [x,y], [y,z], [x,z],  L^2 \rangle.$ Also $\dim \frac{L'}{Z(L)}=1$, i.e., $L'/Z(L)= \gen{[x,y]}$ if $[x,y] \notin Z(L).$ Thus, $L^{2}=\gen{[x,[x,y]],[y,[x,y]}.$ So, $\dim L^2 \leq 2.$ We can take $Z(L)= \langle [y,z], [x,z], L^2) \rangle.$ Thus, by the Jacobi identity, $z \in C_L(L').$ Now, we consider two cases, namely, $\dim L^2$ is $1$ or $2.$

	If $\dim L^2 = 1$ then we can modifying the generating set $\{x, y, z\}$ such that $[x,[x,y]] \neq 0$ and $[y,[x,y]] = 0.$ Therefore,  $L^2 = \gen{[x,[x,y]]}.$ Observe that $[y,z]+[x,[x,y]] \notin w(L)$, by similar arguments as those used in [Lemma \ref{2-step_dim8}, part(1)].
	
	If $\dim L^2=2$ then $L^2= \langle [x,[x,y]],[y,[x,y]]\rangle.$ Observe that if neither $[x, z]$ nor $[y, z]$ lies in $L^2$ then one of them will be a scalar multiple of the other modulo $L^2.$ So assume, we have a generating set $\{x, y, z\}$ such that $[x, z] \in L^2.$ Let $[x,z] = \lambda_1[x,[x,y]] + \lambda_2[y,[x,y]] $ for some $\lambda_1,\lambda_2 \in k.$ Then $[x,z-\lambda_1[x,y]]=\lambda_2[y,[x,y]].$ Taking $z'=z-\lambda_1[x,y]$, we obtain a modified generating set $\{x, y, z'\}$ such that $[x,z']=\lambda_2[y,[x,y]]$ and $z' \in C_L(L').$ If $\lambda_2 =0$, then $[x,z'] = 0$, otherwise take ${\lambda_2}^{-1}z'$ in place of $z'$. Assume that $\lambda_2 = 1$, i.e., $[x,z']=[y,[x,y]].$
	
  Assume there exist $\alpha_i, \beta_i \in k$ for any given $\mu_1, \mu_2 \in k^*$ such that
	$${\mu_1}[y,z']+\mu_2[x,[x,y]] = [{\alpha_1}x+{\alpha_2}y+{\alpha_3}z'+{\alpha_4}[x,y], {\beta_1}x+{\beta_2}y+{\beta_3}z'+{\beta_4}[x,y]].$$
	After opening the Lie brackets and comparing the terms on both sides, we get
	\begin{eqnarray} \label{numeric6}
		\beta_2 \alpha_1 - \alpha_2 \beta_1& =& 0 \label{numeric6}\\
		\beta_3 \alpha_2 - \alpha_3 \beta_2 &=& \mu_1  \label{numeric7}\\
		\lambda_2(\beta_3 \alpha_1 - \alpha_3\beta_1)+\beta_4 \alpha_2 - \alpha_4 \beta_2&=&0 \label{numeric8}\\
		\beta_4 \alpha_1 - \alpha_4 \beta_1&=&\mu_2.\label{numeric9}
	\end{eqnarray}
	We have two cases, $\lambda_2=0$ and $\lambda_2=1.$ First, $\lambda_2=0$ is similar in calculation as in the previous paragraph. So, let us discuss case $\lambda_2=1.$ 
	If $\alpha_1=0$ then by (\ref{numeric9}), $\alpha_4, \beta_1$ both are non-zero. So (\ref{numeric6}), gives $\alpha_2=0.$ By (\ref{numeric7}),(\ref{numeric9}) we get $\beta_2=-\alpha_3^{-1}\mu_1, \beta_1 = - \alpha_4^{-1} \mu_2$, putting these in (\ref{numeric8}), we get ${(\alpha_3 \alpha_4^{-1})}^2=-\mu_1 \mu_2^{-1},$ but we can choose $\mu_1, \mu_2 \in k$ such that $(-\mu_1 \mu_2^{-1})$ is a non-quadratic residue. Thus, the claim is proved in this case. If $\alpha_1, \alpha_2$ are both non-zero then by doing similar calculation done in Lemma [\ref{2-step_dim8}, part(3)] will prove our claim.
\end{proof}

\begin{lem}\label{main1}
	Let $L$ be a $3$-step finite dimensional nilpotent Lie algebra such that $b(L)=3$, $\dim Z(L) = 3$, $\dim L'=4$ and  $\dim L \geq 8$. Then $w(L)=L'.$
\end{lem}
\begin{proof}
	As we observed in the first paragraph of the above Lemma \ref{p7lem2}, $\dim L^2 \le 2$.
	
	{\bf{Case(1)}} Let $\dim L^2  = 1$ then there exists an ideal $I$ of $L$ such that $[L/I : Z(L/I)] = 3$ by Theorem \ref{breadth3_classify}. If $I \neq L^2$, then by using \ref{keyresult} and Lemma \ref{ext_lemma}, we can find $2$-generator Lie subalgebra $M$ of $L$ such that $M'=L'$, which is not possible as $\dim (L'/L^2) = 3.$ So, $I = L^2$. By Theorem \ref{keyresult}, there exits a $3$-generator, $7$-dimensional Lie subalgebra $M$ of $L$ such that $M'=L'.$
	
	If $\dim L=8$, then $L=\gen{x, y, z, w}$ such that $M=\gen{x, y, z}$ and $[w, M] = I$ by Theorem \ref{keyresult}. If $\dim L \geq 9$ then, for some integer $n \ge 2$, $L = \gen{x, y, z, u_1, \ldots, u_n}$ such that $M = \gen{x, y, z}$ with $M' = L'$ and $[u_i, L] = I$ for $1 \le i \le n.$ We have the following two subcases:
	
	{\it{ Subcase(1a)}} $[u_i, M] = I$ for some $i \in \{1,\ldots,n\}$.
	
	{\it{ Subcase(1b)}} $[u_i, M] = 0$ for all $1 \le i \le n$.
	
	Let us discuss them one by one.
	
	{\it{ Subcase(1a)}} Let $[u_i, M] = I$ for some $i, \, 1 \le i \le n.$ Take $w:=u_i$ and $N=\gen{x,y,z,w}$. $N$ is $8$-dimensional Lie subalgebra of $L$ with $N'=L'$ and $[w,M]=I$. So, it is sufficient to study $N$, i.e., this case reduces to the former one where $\dim L=8$. If $0 \neq [x,[x,y]] \in L^{2}=I$ then take $I=\gen{[x,[x,y]]}$. We can modify the generating set for $M$ such that $L'= M'=\gen{ [x,y],[x,z],[y,z],[x,[x,y]]}$ with $[z,[x, y]] =0.$ Therefore $(L/I)'=\langle [\bar x, \bar y],[\bar x, \bar z],[\bar y, \bar z]\rangle$, where $\bar v = v+I$ for all $v \in L.$ Since $[w,M]=I$, therefore $[x,w]=\lambda_1[x,[x,y]]$ for some $\lambda_1 \in k$, then $[x,w-\lambda_1[x,y]]=0.$ Replacing $w$ by $w-\lambda_1[x,y]$, we can assume that $[x,w]=0.$ If $[z, w] \neq 0$, then $[z, w] =\lambda_2 [x, [x, y]]$ for some $\lambda_2 \in k^*.$ Let $\mu_i \in k$, $1 \leq i \leq 4$,
	\[
	\begin{aligned}
		&\text{ If } \mu_1 \neq 0 \text{ then } {\mu_1}[\bar x,\bar y]+{\mu_2}[\bar y,\bar z]+{\mu_3}[\bar x,\bar z]=[\bar x-\mu_2{\mu_1}^{-1} \bar z, {\mu_1}\bar y+{\mu_3}\bar z]\\
		&\text{ If } \mu_1= 0 \text{ then }{\mu_2}[\bar y,\bar z]+{\mu_3}[\bar x,\bar z]=[\bar z, -\mu_3 \bar x-\mu_2 \bar y]\\
	\end{aligned}
	\]
	Further, \begin{eqnarray*}
	 {\mu_4} [x,[x,y]] &=& [x- \mu_2 {\mu_1}^{-1} z, {\mu_4}[x,y]].\\
	 {\mu_4} [x,[x,y]] &=& [z, {\mu_4}\lambda_2^{-1}w].
	\end{eqnarray*}
 We can easily check that for $\delta = 0, 1$ and $i \in k$, 
	$$L'/I = \bigcup \limits_{\delta, i} \ [{\delta}{\bar x}+ i \bar z, \bar L] \text{ and } I \subseteq \bigcap \limits_{\delta, i}  \ [{\delta} x+i z, L],$$
	where $i$ and $\delta$ are not simultaneously zero.
	Hence, $w(L) = L'$ by Lemma \ref{prelem3}. Lastly, if $[z, w] =0$ then $[y , w] \neq 0.$ By similar arguments, the result holds for this case also. 
			
	{\it{ Subcase(1b)}} Let $[u_i, M] = 0$ for all $1 \le i \le n$ and $\dim L \geq 9$. Then $L$ is a central product of $M$ and $K := \gen{u_1, \ldots, u_n}.$ $K$ is non-abelian as $Z(L) \subseteq L'$. So $\exists \, u_i, u_j \in K$ for some $1 \leq i<j \leq n$ such that $I = \gen{[u_i, u_j]}$. Then $N:=\gen{x, y, z, w, v}$, where $w = u_i$ and $v = u_j$, is $9$-dimensional subalgebra with $N'=L'.$ For $i \in k$, we can easily show that,
	$$N'/I =  [\bar z+ \bar w, \bar N]\bigcup \limits_{i} \ [ \bar x+i \bar z+ \bar w, \bar N] \  $$
	
	$$I \subseteq \ [z +w, N] \bigcap \limits_{ i} \ [ x +i z+ w, N]  ,$$ 
	where  $\bar a = a+ I$ for $a \in N.$ Hence, $w(N) = N'$ by Lemma \ref{prelem3}.

	{\bf{Case(2)}}{\it{ Let $\dim L^2 = 2.$}} We can show that $I \nsubseteq L^2$ as in the above case. Then, by Theorem \ref{keyresult}, either (i) there exists a $2$-generator, $5$-dimensional, $3$-step nilpotent Lie subalgebra $M$ such that $L$ is central product of $M$ and $K$ with $\dim K' = 1$ or (ii) there exists a $3$-generator, $7$-dimensional, $3$-step nilpotent Lie subalgebra $M$ such that $M' = L'$ and $L=M+K$ where $K$ is at most $2$-step nilpotent subalgebra of $L$. In case(i), $w(L) = L'$ by Lemma \ref{prelem4}. So assume (ii).

	If $\dim L = 8$, then $L = \gen{x, y, z, w}$ such that $M = \gen{x, y, z}$ and $\gen{[w, M]} = I.$ If $\dim L \geq 9$ then, for some integer $n \ge 2$, $L = \gen{x, y, z, u_1, \ldots, u_n}$ such that $M = \gen{x, y, z}$ and $[u_i, L]= I$ for $1 \le i \le n.$ 
	First, assume that $[u_i, M] = I$ for some $1 \le i \le n.$ Then the subalgebra $N:=\gen{x, y, z, w}$, where $w = u_i$, is $8$-dimensional such that $[w, M] = I$ and $N' = L'.$ As observed above, it is sufficient to study $N$, i.e., the case where $\dim L= 8.$ 
	
	Using the arguments of Lemma \ref{p7lem2}, assume that $I= \gen{[y, z]}$,
	$$M^2 = \gen{[x, [x, y]], [y, [x,y]]},$$
	$[x, z] \in M^2$ and $z \in C_M(M').$  If $[y,w]=\lambda_1[y,z]$ for some $\lambda_1 \in k$, then $[y,w-\lambda_1 z]=0.$ Set $w'= w-\lambda_1 z.$ If $[z,w']=\lambda_2[y,z]$ for some $\lambda_2 \in k$, then $[z, w'+\lambda_2 y] =0.$ Changing $w$ by $w'+\lambda_2 y$, we can assume that $[y, w] = [z, w] = 0.$  Let $[x,w] =\lambda_3[y,z]$ for some $\lambda_3 \in k^*.$ Using arguments from the preceding subcase(1a), it is not difficult to get:
	$$ L'/I = \bigcup \limits_{ \delta, i} \ [{\delta} \bar x+i {\bar y}, \bar L] \text{ and } I \subseteq  \bigcap\limits_{ \delta, i} \ [{\delta} x+i y, L] ,$$
	where $\bar a = a +I$ for $a \in L$ and $i \in k$ and $\delta = 0, 1$ such that $i$ and $\delta$ are not simultaneously zero. Hence, $w(L) = L'$ by Lemma \ref{prelem3}.
	
	Now, consider the case $[u_i, M] = 0 \, \forall \, 1 \le i \le n.$ Here $L$ is a central product of $K := \gen{u_1, \ldots, u_n}$ and $M$. $K$ is non-abelian as $Z(L) \subseteq L'$. So $\exists \, u_i, u_j \in K$ for some $1 \leq i<j \leq n$ such that $I = \gen{[u_i, u_j]}$. Then $N:=\gen{x, y, z, w, v}$, where $w = u_i$ and $v = u_j$, is ${9}$-dimensional subalgebra such that $N'=L'.$ For $i \in k$, we can easily see that 
	$$N'/I = \ [\bar y+\bar w,\bar N] \bigcup \limits_{i} \ [\bar x +i \bar y+ \bar w ,\bar N],  $$
	$$I  \subseteq   [y +w , N]\bigcap \limits_{ i} \ [x+i y+ w , N].$$ Thus, $w(N) = N'$.
\end{proof}

\section{$4$-step nilpotent Lie algebras}\label{sec6}
 From \cite{graaf2007}, $L_{6,21}(0), L_{6,21}(\epsilon)$, where $\epsilon \neq 0$ are only $4$-step nilpotent Lie algebras of breadth $3$ with $4$-dimensional derived subalgebra up to isoclinism. We will discuss these in the following lemma.

\begin{lem}\label{p6lem}
Let $L$ be a $6$-dimensional, $4$-step nilpotent Lie algebra over field $k=\mathbb{F}_q$, field of odd Characteristic with $\dim L'=4.$ Then $w(L) = L'$ if and only if $\dim Z(L) = 1.$ Furthermore, if $w(L) \neq L'$, then each element of $L'$ is sum of at most two elements of $w(L).$ More precisely, If $L=L_{6,21}(\epsilon)$, where $\epsilon \neq 0$ then $w(L) = L'.$ If $L=L_{6,21}(0)$ then $w(L) \neq L'$ and each element of $L'$ is sum of at most two elements of $w(L).$
\end{lem}

\begin{proof} Since $L_{6,21}(0)$ and $ L_{6,21}(\epsilon)$, where $\epsilon \neq 0$ are only $4$-step nilpotent Lie algebras (upto isoclinism) of dimension $6$ whose derived subalgebra has dimension $4.$ If $L=L_{6,21}(0)$ then its presentation is 
\begin{equation*}
L=	\left\lbrace u_1,u_2,u_3,u_4,u_5,u_6: \begin{aligned} &[u_1,u_2]=u_3,[u_1,u_3]=u_4,\\
&[u_1,u_4]=u_6, [u_2,u_3]=u_5
\end{aligned} \right\rbrace
\end{equation*}
Observe that $Z(L)=\left\langle u_5,u_6 \right\rangle$ .i.e., $\dim Z(L)=2$ and $L$ is minimally generated by $u_1,u_2.$ So, $[u_1,u_4]+[u_2,u_3] \notin w(L)$ by the same arguments used in Lemma [\ref{2-step_dim8}, part(1)]. Thus, $w(L) \neq L'.$ Further, by applying Theorem \ref{dimL'3} and Lemma \ref{prelem5} on $L/I$ where $I=<u_5>$, we can write every element of $L'$ as a sum of at most two elements of $w(L)$. 
If $L=L_{6,21}(\epsilon)$, where $\epsilon \neq 0$, then it has presentation 
\begin{equation*}
	L_{6,21}(\epsilon) =\left\lbrace u_1,u_2,u_3,u_4,u_5,u_6:\begin{aligned} &[u_1,u_2]=u_3,[u_1,u_3]=u_4,[u_1,u_4]=u_6,\\ &[u_2,u_3]=u_5,[u_2,u_5]=\epsilon u_6
	\end{aligned} \right\rbrace.
\end{equation*}
Here $Z(L)=\left\langle u_6 \right\rangle$, i.e., $\dim Z(L)=1$,  $L$ is minimally generated by $u_1,u_2$ and $L' =<u_3,u_4,u_5,u_6>$ with $L^3=<u_6>.$
\[
\begin{aligned}
	& {\begin{aligned} \text{If } \beta \neq 0 \text{ then } \alpha u_3+ \beta u_4+\gamma u_5 
	 &=\alpha[u_1,u_2]+\beta [u_1,u_3]+\gamma [u_2,u_3]\\
	 &=[u_1+\gamma \beta^{-1}u_2, \alpha u_2+\beta u_3].
	\end{aligned}}\\
	&\text{If } \beta=0 \text{ then } \alpha u_3 +\gamma  u_5 =\alpha[u_1,u_2]+\gamma [u_2,u_3]= [u_2,-\beta u_1+\gamma u_3].
\end{aligned}
\]
\begin{eqnarray*}
	\text{Hence } L'/L^3 &= & [u_2, \bar{L}] \bigcup\limits_{\substack{{\beta \neq 0}\\{\gamma \in k}}} [\bar{u_1} +\gamma \beta^{-1} \bar{u_2}, \bar{L}] \\
	 \text{and } L^3 &\subseteq & [u_2, L] \bigcap\limits_{\substack{{\beta \neq 0}\\{\gamma \in k}}} [u_1 +\gamma  \beta^{-1} u_2, L]
\end{eqnarray*}Hence $w(L)=L'$ by Lemma \ref{prelem3}.
\end{proof}\medskip

\begin{lem} \label{p7lem3}
Let $L$ be a $7$-dimensional $4$-step nilpotent Lie algebra over $k=\mathbb{F}_q$, field of odd characteristic, with $b(L)=3$ and $\dim L'=4.$ Then $w(L)=L'.$
\end{lem}

\begin{proof}
Since $Z(L) \subseteq L'$, so $\dim Z(L) \leq 4.$  \begin{enumerate}
\item[•] If $\dim Z(L)= 4$ then $L$ is a $2$-step nilpotent Lie algebra, which is not the case.
\item[•] If $\dim Z(L)= 3$ then $L$ is a $3$-step nilpotent Lie algebra, which is not the case.
\end{enumerate}
So $\dim Z(L) \leq 2.$ By Theorem \ref{keyresult}, there exists a $4$-step, $6$-dimensional nilpotent Lie subalgebra $M$ of $L$ such that $M' = L'.$ If $\dim Z(M)= 1$, then $w(M) = M' = L'$  by Lemma \ref{p6lem}. Thus, $w(L) = L'.$ If $\dim Z(M) = 2$ then there exists an ideal $I$ of $L$ such that $|L/I:Z(L/I)|=3$ by Theorem \ref{breadth3_classify}.Take $L = \left\langle x, y, z \right\rangle$ such that $M = \left\langle x, y \right\rangle.$ Therefore, $I = L^3$ using the arguments given in the proof of Theorem \ref{keyresult}(last paragraph). Thus, $\bar{L} := L/I = \left\langle \bar{x}, \bar{y}, \bar{z} \right\rangle $ is $3$-step nilpotent such that $\bar{z} \in Z(\bar{L})$. So, $[z, L] = I$ as $z \notin Z(L)$. Observe that $\bar{L}' = \left\langle [\bar x, \bar y], [\bar x, [\bar x, \bar y]],  [\bar y, [\bar x, \bar y]]\right\rangle .$ By doing similar calculation as in Theorem [\ref{dimL'3}, subcase(2c)], we can get, $\bar{L}' =   \bigcup \limits_{{\delta},{i}} \ [{\delta}{\bar{x}}+i{\bar{y}} , \bar L]$ where for $i \in k$ and $\delta = 0, 1$ such that $i$ and $\delta$ are not simultaneously zero.

Observe that $M$ is in the isoclinism class $L_{6,21}(0)$ of \cite{graaf2007}. So, we can assume that $I=\gen{[y, M^2]}$. Since $C_M(z)$ is maximal, we can modify $z$ such that $[y,z]=0$ and $I=\langle [x,z] \rangle.$ Then $I \subseteq \bigcap\limits_{\substack{{\delta \in \{0,1\}}\\{i \in k}}} [{\delta}x+iy , L]$, where $i$ and $\delta$ cannot be simultaneously zero. Hence, $w(L)=L'$ by Lemma \ref{prelem3}.
\end{proof}

 \begin{lem}\label{4-step nilpotent} Let $L$ be a $4$-step nilpotent Lie algebra of dimension at least $7$ over field $k=\mathbb{F}_q$ of odd characteristic with $b(L)=3$, $\dim L'=4.$ Then $w(L)=L'.$
 \end{lem}
 \begin{proof}
Given $\dim L \geq 7$ and $\dim L'=4$, $Z(L) \subseteq L'$ then $\dim Z(L)\leq 2$ as $\dim Z(L)=3$ and $[L': Z(L)]=1.$ By Lemma \ref{maximal_nilpotency_class}, $L$ is at most $3$-step nilpotent, which contradicts the given hyothesis. By Theorem \ref{keyresult}, there exists a $2$-generator Lie subalgebra $M$ of $L$ such that $\dim M = 6$, $M' =L'$ and $L=M+K$ with $\dim K \leq 1.$ Also $K = \gen{u_1, \ldots, u_n}$, for some $n \ge 1$ such that $[u_i, L] = I$ for $1 \le i \le n.$ If $[u_i, M] = I$ for some $1 \le i \le n$, then the subalgebra $L_1:=\gen{x, y, z}$, where $z = u_i$ is $7$-dimensional such that $L_1' = L'.$ Hence,$w(L) = L'$ by Lemma \ref{p7lem3}. Now, assume that $[u_i, M] = 0$ for all $1 \le i \le n.$ Also $K$ is non-abelian as $Z(L) \subseteq L'$. Thus, $L$ is a central product of $M$ and $K.$

{\textbf{Case(1)} } If $\dim Z(L) = 1$ then $\dim Z(M)= 1$. So, $w(M) = M'$ by Lemma \ref{p6lem}. Hence, $w(L) =L'.$

{\textbf{Case(2)} } If $\dim Z(L) = 2$ then $K' = I$ and so $\exists \, u_i, u_j \in K$ for some $1 \leq i<j\leq n$, such that $I = \gen{[u_i, u_j]}$. Let $N:= \gen{x, y, z, w}$, where $z= u_i$ and $w = u_j.$ Then $L' =N'.$ Consider $\bar N := N/I.$ As we saw
$(\bar{N})' = \gen{[\bar x, \bar y], [\bar x, [\bar x, \bar y]], [\bar y, [\bar x, \bar y]]}$ in the proof of Lemma \ref{p7lem3}. 
We can easily show that $i, j \in k$, $(\bar{N})' = \bigcup \limits_{{i},{j}} \ [{i\bar{x}}+j {\bar{y}}+ \bar z, \bar N]$ and $I \subseteq \bigcap\limits_{{i},{j}} [i x+j y+ z, N].$ Hence, $w(N)=N'$ by Lemma \ref{prelem3}, and therefore we have $w(L)=L'.$
 \end{proof}
 

\section{Proof of Theorem \ref{thmA}}\label{sec7}

\noindent {\it Proof of Theorem \ref{thmA}.} Let $L$ be a finite-dimensional nilpotent Lie algebra over $k$ such that $\dim L'=4.$ Also, let $Z(L) \subseteq L'$. Observe that $L$ is at most $5$-step nilpotent. By Remark \ref{remark}, we have  $b(L) \ge 3.$ Observe that $\dim L \geq 6.$ If $b(L) = 4$, then $w(L) = L'.$ Therefore, we assume that $b(L) = 3.$ When $L$ is $2$-step nilpotent, the assertion follows from Lemmas \ref{2-step_dim8} and \ref{cl2lem2}. Now, let $L$ be a $3$-step nilpotent. There is no $3$-step nilpotent Lie algebra of dimension $6$ satisfying the given hypothesis. Therefore, $\dim L \geq 7.$ If $\dim Z(L) \leq 2$, then $w(L) = L'$ according to the Lemma \ref{p7lem2}. If $\dim L \geq 8$ and $\dim Z(L) = 3$, then by Lemma \ref{main1}, we have $w(L) = L'.$ Finally, if $\dim L = 7$ and $\dim Z(L) = 3$, then by Lemma \ref{p7lem2} we have $w(L) \neq L'.$ It only remains to deal the cases where $L$ is $4$-step or $5$-step nilpotent.

Let $L$ be a $4$-step nilpotent. If $\dim L = 6$, then it follows from Lemma \ref{p6lem} that $w(L) = L'$ if and only if $\dim Z(L)= 1.$ So assume that $\dim L \geq 7.$ Then, using Lemma \ref{4-step nilpotent}, $w(L)=L'.$

Finally, assume that $L$ is $5$-step nilpotent. Our claim is $b(L) = 4$. Suppose, $b(L)\neq 4$, i.e., we can assume that $b(L) = 3.$ Since $L/Z(L)$ is $4$-step nilpotent and $Z(L) \subseteq L'$, we have $\dim Z(L) =1.$ Hence, by Theorem \ref{breadth3_classify}, $I=Z(L)$ is the only choice such that $[L/I:Z(L/I)] =3 $. Taking  $Z_2(L):=\{x \in L: [x,y] \in Z(L) \, \forall \, y \in L\}$, we get $[\bar L : \bar{Z_2(L)}] = 3$, where $\bar L= L/Z(L), \bar{Z_2(L)}= Z_2(L)/ Z(L)$, i.e., $\dim L/Z_2(L) = 3$, which is not true as $L/Z_2(L)$ is a $3$-step nilpotent. Hence, our claim is proved, and the proof of the theorem is complete. \hfill $\Box$

\bibliographystyle{plain}
\bibliography{arxiv_file}
\end{proof}
\end{document}